\DeclareMathOperator{\Diag}{Diag}
\DeclareMathOperator{\sech}{sech}
\DeclareMathOperator{\K}{\mathbb{K}}
\DeclareMathOperator{\SU}{\mathrm{SU}}
\DeclareMathOperator{\Herm}{Herm}
\newcommand{\KP}{\mathbb{KP}}
\newcommand{\KPd}{\mathbb{KP}^{d-1}}
\newcommand{\KPdual}{(\mathbb{KP}^{d-1})^\ast}
\newcommand{\Abs}[1]{\ensuremath{\left\lVert #1 \right\rVert}}
\title{Certifying Anosov representations}
\author{J. Maxwell Riestenberg}
\date{\today}
\begin{document}

\begin{abstract}
    By providing new finite criteria which certify that a finitely generated subgroup of $\SL(d,\R)$ or $\SL(d,\C)$ is projective Anosov, we obtain a practical algorithm to verify the Anosov condition. 
    We demonstrate on a surface group of genus 2 in $\mathrm{SL}(3,\mathbb{R})$ by verifying the criteria for all words of length 8.
    The previous version required checking all words of length $2$ million.
\end{abstract}

\keywords{discrete subgroups of Lie groups,
Anosov subgroups,
symmetric spaces,
coarse geometry,
hyperbolic groups}

\maketitle

\section{Introduction}

In general, it is a hard problem to determine whether a finite subset of $\SL(d,\R)$ generates a discrete subgroup.
Aside from the special case of $\SL(2,\R)$ \cite{GM1991algorithm}, the discreteness property cannot be decided by an algorithm \cite{kapovich2016discreteness} in the Blum-Schub-Smale model of computation.
Nonetheless, it remains possible to verify stronger properties than discreteness via certain geometric algorithms \cite{Kap23}.
This paper concerns a numerically stable algorithm that can verify the \emph{Anosov} property, which is stronger than discreteness, in finite time. 

\emph{Anosov subgroups}, introduced by Labourie \cite{Lab06} and Guichard-Wienhard \cite{GW12}, provide a rich yet tractable source of examples of infinite covolume discrete subgroups of higher rank Lie groups.
Their importance is underscored by their central role in \emph{higher Teichm\"uller theory} \cite{Hit92, fockModuliSpacesLocal2006a, GW18} as well as the rich examples of dynamical systems \cite{canary2024measurestransverse,ELO23anosovmixing,Sam24ergodic,delarue2025locally} and geometric structures on manifolds \cite{KLP18a, dancigergueritaudkassel24convexcocompactrealprojective, Kas18} that they provide.
Anosov subgroups generalize convex cocompact subgroups of isometries of hyperbolic space to higher rank.
In particular, they can be characterized in terms of the coarse geometry of their action on the associated symmetric space \cite{KLP17,KLP18b,BPS19}.

\medskip

In the present article, we present the first practical algorithm which certifies that a finitely generated linear group is projective Anosov. 
The core of the algorithm, and main result of this paper, is Theorem \ref{thm:straight and spaced implies undistorted}, which establishes new finite criteria for a sequence in the symmetric space to have coarsely linear singular value gaps.
We demonstrate the practicality of the algorithm by verifying the criteria on an example of a surface group in $\SL(3,\R)$ by checking words of length $8$. 

\medskip

The approach is based on a coarse geometric characterization of Anosov subgroups due to Kapovich-Leeb-Porti \cite{KLP14,KapovichLeebPorti2023localtoglobal}. 
They proved a local-to-global property for \emph{Morse quasigeodesics}, and described an algorithm to certify the Anosov property of a finitely generated subgroup of a semisimple Lie group. 
If a subgroup is Anosov, their algorithm will stop and certify so in finite time; otherwise, the subgroup is not Anosov and the algorithm will run forever.
The author made their algorithm effective \cite{riestenberg2024quantified} by supplying their arguments with explicit estimates. 
While effective, that version of the algorithm was impractical, requiring the user to verify a condition on words of length $2$ million even on a simple example of an Anosov surface group in $\SL(3,\R)$. 

The bulk of the improvement is due to Lemma \ref{lem:angle-distance formula}, which provides a formula relating $\zeta$-angles (Definition \ref{def:zeta angles}) and distances to parallel sets (\S \ref{sec:parallel sets}) of the relevant type. 
Such a formula cannot hold for parallel sets in general (Remark \ref{rem:no formula}).
This is why the present paper only directly deals with projective Anosov subgroups of $\SL(d,\R)$ or $\SL(d,\C)$, rather than general $\Theta$-Anosov subgroups of an arbitrary semisimple Lie group $G$.
Fortunately, verifying the $\Theta$-Anosov property of such a subgroup reduces to the projective Anosov case in a straightforward way \cite[Section 4]{GW12}, so the approach here may be applied in the general case.
We mention that a different algorithm to verify the $\Theta$-Anosov property was recently obtained by the author and Davalo \cite{davaloriestenberg2024dirichletanosov}.
It is based on Dirichlet domains with respect to Finsler metrics, and guaranteed to eventually terminate for $\Theta$-Anosov subgroups in certain cases, e.g.\ $n$-Anosov subgroups of $\Sp(2n,\R)$ or Borel Anosov subgroups of $\SL(d,\R)$ or $\SL(d,\C)$. 

The local-to-global principle  of Kapovich-Leeb-Porti relies on a Theorem which guarantees that sufficiently straight and spaced sequences are Morse quasigeodesics.
Our Theorem \ref{thm:straight and spaced implies undistorted} is a similar but slightly different statement that guarantees a sequence is \emph{$d_\alpha$-undistorted} (Definition \ref{def:definitions for sequences}), which means that the first singular value gap grows coarsely linearly. 
A Morse quasigeodesic is always $d_\alpha$-undistorted but the converse does not hold for arbitrary sequences. 
Accordingly, the criterion we obtain here has weaker assumptions and a weaker conclusion than e.g.\ \cite[Theorem 3.18]{KapovichLeebPorti2023localtoglobal}. 
See the beginning of Section \ref{sec: straight and spaced} for a more precise discussion.
Besides the statement, there are also some technical differences with the proof here compared with their proof and that of \cite{riestenberg2024quantified}. 
For example, Kapovich-Leeb-Porti work with an $\iota$-invariant model simplex (where $\iota$ is the ``opposition involution") and we crucially drop that assumption here.
Compared to the proof in \cite{riestenberg2024quantified} we make use of further auxiliary parameters and incorporate new estimates (Lemma \ref{lemma: dalpha and busemanns} and Lemma \ref{lem:lipschitz constant of zeta}) and the key angle-to-distance formula, Lemma \ref{lem:angle-distance formula}, mentioned above.

\medskip

Applying the local-to-global principle amounts to calculating various geometric quantities in the associated symmetric space.
An implementation by the author is available at \cite{Rie24code}, and a faster implementation by Teddy Weisman is available at \cite{Weis24code}.
Both implementations are in Python. 
KBMAG \cite{kbmag1.5.11} is required to produce an automoton and enumerate all geodesic words of length $8$. 
Strictly speaking, neither computation is rigorous, in the sense that there is no guarantee on the numerical precision in the calculation; however, the results presented here are compatible with the computation in \cite{Rie23verifying} based on hyperbolic geometry, so are expected to be accurate.
An implementation with numerical guarantees is necessary to use these techniques to rigorously prove the Anosov property.
A generalized implementation, especially one which can accept approximate generating sets and provide rigorous guarantees for the numerical precision, is an appealing avenue for future work.

\section{Acknowledgments}
I remain grateful to my PhD advisor, Jeff Danciger, who first suggested to me the problem of making the Kapovich-Leeb-Porti algorithm effective. 
I warmly thank Jacques Audibert and the anonymous referee for helpful remarks on the previous draft which have improved the exposition.

\section{Symmetric space reminders}

For necessary background on symmetric spaces we refer to \cite{helgason79,eberlein96}.

\subsection{The model and metric}

Let $\K = \R$ or $\C$ and let $\X$ denote the symmetric space associated to $G=\SL(d,\K)$. 
Concretely, a model for $\X$ is given by
$$ \X = \{ X \in \Herm(d,\K) : X \gg 0, \det(X) =1 \} $$
where $\Herm(d,\K)$ denotes Hermitian $d\times d$ matrices with entries in $\mathbb{K}$ and $X \gg 0$ means that $X$ is positive definite.
There is a natural action of $\SL(d,\K)$ on $\X$ given by $g.X=gXg^\dagger$, where $g^\dagger$ denotes the conjugate transpose of $g$. 
We denote the stabilizer of the identity matrix $I_d$ by $K=\SU(d,\K)$, which is simply $\SO(d)$ if $\K=\R$ and $\SU(d)$ if $\K=\C$.
This model corresponds to the natural identification of $\X$ with the space of inner products on $(\K^d)^\ast$ modulo scaling. 

There is a unique $\SL(d,\K)$-invariant Riemannian metric on $\X$ up to scale.
With such a metric, $\X$ becomes a Riemannian symmetric space: for each $p \in \X$ there exists a (unique) involutive isometry $S_p \colon \X \to \X$ with $p$ as an isolated fixed point.
Moreover $\X$ is a symmetric space of non-compact type and in particular $\X$ is a Cartan-Hadamard manifold. 

In this paper we use the Riemannian metric: 
$$ \forall X,Y \in T_p\X \subset \Herm(d,\K), \quad \metric{X}{Y}_p = \frac12 \Tr(p^{-1}Xp^{-1}Y) .$$ 
The specific choice of metric will simplify some of the formulas to follow. 
For this metric, when $d\ge 3$, the sectional curvature of $\X$ has image $[-1,0]$, and for $d=2$, $\X$ has constant sectional curvature $-1$.

\medskip

\subsection{Geodesics and vector-valued distance}

If $c\colon \R \to \X$ is a geodesic, then there is a unique $1$-parameter subgroup $t \mapsto \exp(tX)$ in $\SL(d,\K)$ such that $c(t) = \exp(tX).c(0)$.
We now recall the well-known fact that every geodesic in a symmetric space can be put into a standard position.
Let $\mathfrak{a}$ denote the set of real diagonal $d\times d$ matrices of trace $0$, and let $\mathfrak{a}^+$ denote the subset of $\mathfrak{a}$ whose diagonal entries are non-increasing. 

\begin{fact}
    If $c$ is a geodesic with $c(0)=I_d$ then there exist $k,k' \in \SU(d,\K)$ and $A \in \mathfrak{a}^+$ such that $c(t) = k \exp(tA)k'.I_d$.
    The element $A \in \mathfrak{a}^+$ is uniquely determined.
\end{fact}

This leads to a vector-valued invariant for tangent vectors and a vector-valued distance for pairs of points.
Indeed, if $v \in T_p\X$, then let $c$ be the geodesic with $c'(0)=v$, and set $\vec{d}(v)=A \in \mathfrak{a}^+$. 
Similarly, for a pair of points $p,q \in \X$ we let $\vec{d}(p,q)$ denote the unique $A \in \mathfrak{a}^+$ associated to the geodesic segment $pq$. 

\subsection{Scale of the metric}\label{subsec:scale of metric}
The map $\mathfrak{a} \to \mathbb{X}$ given by $A \mapsto \exp(A).I_d$ becomes an isometric embedding when $\mathfrak{a}$ is endowed with the inner product 
$$ A,B \in \mathfrak{a} \mapsto 2 \Tr(AB) ,$$
because the derivative of the orbit map at $I_d$ restricted to symmetric matrices is multiplication by $2$. 
Note that the Killing form of $\mathfrak{sl}(d,\K)$ is given by 
$$ X,Y \in \mathfrak{sl}(d,\K) \mapsto 2d \Tr(XY) ,$$
so the metric we use in this paper is $\frac1{d}$ times the Riemannian metric induced by the Killing form. 
As a result, the present paper has some slightly different formulas compared to \cite{riestenberg2024quantified}. 
Specifically, each appearance of $\kappa_0$ there is replaced with $1$ here. 

\medskip

\subsection{The visual boundary}

The \emph{visual boundary} of $\X$, denoted $\partial_{\rm vis} \X$, is the set of unit-speed geodesic rays up to asymptotic equivalence. 
For any $p \in \X$, the exponential map yields a homeomorphism $S(T_p \X) \to \partial_{\rm vis} \X$ with respect to the visual topology on $\partial_{\rm vis} \X$.
Any isometry of $\X$ extends to a homemorphism of $\partial_{\rm vis}\X$. 
A fundamental domain for the action of $\SL(d,\K)$ on $\partial_{\rm vis} \X$ is given by the set of unit vectors in $\mathfrak{a}^+$, which we denote by $\sigma_{mod}$ (called the \emph{model (spherical) Weyl chamber}).
Each $\SL(d,\K)$-orbit in $\partial_{\rm vis}\X$ meets $\sigma_{mod}$ exactly once, so there is an induced map $\partial_{\rm vis} \X \to \sigma_{mod}$.
The image of an ideal point under this map is called its \emph{type}.

\subsection{Projective space}

This paper concerns projective Anosov subgroups, so projective space and its dual will play a distinguished role.
We must first describe how those spaces are embedded naturally in the visual boundary.

Let 
\begin{equation}\label{eqn: Z}
    Z = \frac1{\sqrt{2d(d-1)}}
                \begin{bmatrix}  
                    d-1 &  &  &  &  \\
                    & -1 &  &  &  \\
                    & & -1 & & \\
                    & & & \ddots & \\                 
                    & & & & -1 \\
                \end{bmatrix} \in \mathfrak{a}^+
\end{equation}
and consider the unit-speed geodesic ray $c_Z(t)\coloneqq \exp(tZ).I_d$. 
The orbit $G.[c_Z]\subset \partial_{\rm vis} \X$ is a copy of the projective space $\KPd$ with equivariant diffeomorphism given by $g[c_Z] \mapsto g.[e_1]$, where $e_1 \in \mathbb{K}^d$ spans the $(d-1)$-eigenspace of $Z$. 
The same correspondence is obtained by observing that the Hermitian matrices along a ray $g \circ c_Z$ limit, up to rescaling, to a rank $1$ Hermitian matrix, which may be written $vv^\dagger$ for a unique nonzero $v \in \mathbb{K}^d$ up to scale.
Simililarly $G.[c_{-Z}]\subset \partial_{\rm vis} \X$ is a copy of the dual projective space $\KPdual$.
In this case the correspondence is given by the radical of the limiting rescaled Hermitian matrix.
For the rest of the paper, we will abuse notation by taking this identification to be implicit. 
In particular, we will frequently write $\measuredangle_q(\hat{\tau},\tau)$ for the Riemannian angle at $q \in \X$ between the ideal points $\tau \in \KPd$ and $\hat{\tau} \in \KPdual$. 

For consistency with the notation of \cite{KLP14, riestenberg2024quantified}, we will use $\zeta$ to denote the type of $[c_Z]$ and $\iota \zeta$ to denote the type of $[c_{\iota Z}]$, where
$$ \iota Z = \frac1{\sqrt{2d(d-1)}}
                \begin{bmatrix}  
                    1 &  &  &  &  \\
                    & 1 &  &  &  \\
                    & & 1 & & \\
                    & & & \ddots & \\                 
                    & & & & 1-d \\
                \end{bmatrix} \in \mathfrak{a}^+
.$$

\subsection{Regular directions and root pseudometric}\label{sec:regularity and pseudometric}
A geodesic segment $pq$ (resp.\ tangent vector $v$) is \emph{$\zeta$-regular} if its vector-valued invariant $A = \Diag(a_1,\dots,a_d)$ has $a_1 -a_2 >0$.
This occurs if and only if $qp$ (resp. $-v$) is \emph{$\iota \zeta$-regular}, i.e.\ the vector-valued invariant is $B$ with $b_{d-1}-b_d >0$. 
For a $\zeta$-regular ideal point $\xi \in \partial_{\rm vis}\X$, there exists a unique $\tau=\zeta(\xi) \in \KPd$ such that every Weyl chamber containing $\xi$ also contains $\tau$, and likewise for $\iota \zeta$-regular points.

We will also be interested in a quantified version of regularity. 
The \emph{(first simple) root pseudometric} $d_\alpha(x,y)$ is given by $a_1-a_2$ when its vector-valued distance is $A = \Diag(a_1,\dots,a_d)$.
We note that, if $\sigma_i(x)$ denotes the $i$th singular value of $x \in \X$, then the root pseudodistance to the basepoint satisfies
$$ d_\alpha(I_d,x) = \frac{1}{2} \log \left( \frac{\sigma_1(x)}{\sigma_2(x)} \right) .$$

\subsection{Busemann functions centered at line-hyperplane pairs}

In the final step of the proof of Theorem \ref{thm:straight and spaced implies undistorted} we will make use of a comparison between the root pseudometric and certain Busemann functions.
Although the proof of Lemma \ref{lemma: dalpha and busemanns} is short, the idea behind it is clarified by a proper discussion about Busemann functions.

In general, if $c \colon [0,\infty) \to \X$ is a geodesic ray, then the associated \emph{Busemann function} $b \colon \X \to \R$ is defined by 
    \[ b(x) \coloneqq \lim_{t \to \infty} d(x,c(t)) - t . \]
The map taking geodesic rays to their corresponding Busemann functions induces a bijection of the visual boundary of $\X$ with the space of Busemann functions modulo constant functions.
The asymptote class of $c$ is called the \emph{center} of the Busemann function $b$.
Geometrically, the (sub)level sets of Busemann functions are limits of (balls, resp.)\ spheres which are neither points nor the whole space $\X$. 
In particular, in Euclidean space, Busemann functions are linear. 
When a flat $F$ in $\X$ contains a geodesic ray $c$, the induced Busemann function $b$ is linear on $F$. 

In a moment we will restrict attention to a specific $G$-orbit of Busemann functions. 
Recall that each maximal flat is congruent to the standard maximal flat
    \[ F_{std} \coloneqq \left\{
                \Diag(a_1,a_2,\dots,a_d)
                : a_1 a_2 \cdots a_d =1, a_i >0 \right\} ,\]
and that the roots of $F_{std}$ are given by $\alpha_{ij}(\exp(\Diag(a_1,\dots,a_d)).I_d) = a_i - a_j$.
Here we use the identification from \S \ref{subsec:scale of metric}.
$G$ acts transitively on the space of triples $(F,p,\alpha)$, where $F$ is a maximal flat, $p \in F$, and $\alpha \colon F \to \R$ is a root. 
The root $\alpha_{1d} \colon F_{std} \to \R$ is dual to the geodesic ray given by $c(t) = \exp(t w).I_d$ where
        \[ w=   \frac12
                \begin{bmatrix}  
                    1 &  &  &  &  \\
                    & 0 &  &  &  \\
                    & & \ddots & & \\
                    & & & 0 & \\                 
                    & & & & -1 \\
                \end{bmatrix} .\]
Simultaneous permutations of the rows and columns of $w$ produce the other coroots of $F_{std}$.

Now let $\mathcal{F}$ denote the space of \emph{line-hyperplane pairs}, i.e.\ 
    \[ \mathcal{F} \coloneqq \left\{ (\tau,\hat{\tau}) \in \KPd \times \KPdual : \tau \subset \hat{\tau} \right\} .\]
The map $g.([e_1],[e^d]) \mapsto [g.c]$ is a well-defined $G$-equivariant diffeomorphism from $\mathcal{F}$ onto its image in the visual boundary of $\X$.
Following the above discussion, its image is precisely the centers of Busemann functions that restrict to roots on any flat asymptotic to their centers. 
The Busemann function $b$ associated to $c$ is given by:
    \[ b(x) = \frac12 \log \left(\Abs{e_1}_{x^{-1}} \Abs{e^d}_{x}\right) .\] 




Recall from \cite[Section 2.5]{KLP17} that the \emph{star} $\st(\tau)$ of a simplex $\tau$ is the union of all Weyl chambers containing it and the \emph{Weyl cone} $V(o,\st(\tau))$ is the union of points on geodesic rays from $o \in \X$ to $\st(\tau)$.
When $\tau \in \KPd$, the intersection $\mathcal{F} \cap \st(\tau)$ consists of line-hyperplane pairs whose line agrees with $\tau$. 

\begin{lemma}[Root pseudometric and Busemann functions]\label{lemma: dalpha and busemanns}
    Let $\tau \in \KPd$. 
    If $y \in V(x,\st(\tau))$, then 
    $$ d_\alpha(x,y) = \min \{ b(x)-b(y) : b \in \mathcal{F} \cap \st(\tau) \} .$$
\end{lemma}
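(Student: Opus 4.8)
The plan is to pass to a standard position using the $\SL(d,\K)$-action. Since both sides of the claimed identity are $\SL(d,\K)$-invariant --- the root pseudometric because it is defined via the $\mathfrak{a}^+$-valued distance, and the Busemann functions because $\mathcal{F}$ is a $G$-orbit and Busemann functions transform equivariantly --- I may assume $x = I_d$ and that the Weyl cone $V(x,\st(\tau))$ is the model Weyl cone, i.e.\ $\tau$ corresponds to the type $\zeta$ and $\st(\tau)$ is swept out by the flat through $\mathfrak{a}$. Concretely, after this reduction $\tau = \zeta([c_Z])$ and $y = \exp(A).I_d$ for some $A = \Diag(a_1,\dots,a_d) \in \mathfrak{a}$ lying in the (closed) cone of directions in $\st(\tau)$; the constraint $y \in V(I_d,\st(\tau))$ becomes exactly $a_1 \ge a_2 \ge \cdots$ being replaced by the weaker ``$\zeta$-face'' condition, namely that $A$ lies in the union of Weyl chambers containing the vector $Z$. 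In this normalization $d_\alpha(x,y) = a_1 - a_2$ by the singular-value formula recorded just before the lemma.

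Next I would identify the minimization on the right-hand side explicitly. An element $b \in \mathcal{F} \cap \st(\tau)$ is a Busemann function centered at an ideal point $\xi \in \st(\tau)$ whose type is the coroot dual to the simple root $\alpha_1 = (a_1 - a_2)$; the normalization built into $\mathcal{F}$ (that $b$ restricts to a root on flats asymptotic to its center) pins down $b$ up to the choice of $\xi$ within $\st(\tau)$. Restricting such a $b$ to the model flat $\exp(\mathfrak{a}).I_d$, the difference $b(I_d) - b(\exp(A).I_d)$ is a linear functional in $A$, and as $\xi$ ranges over the ideal points of $\st(\tau)$ this functional ranges over the ``$W_\tau$-orbit'' of the simple root $\alpha_1$ --- that is, over $\{\alpha_1 \circ w : w \in W_{\st(\tau)}\}$ where $W_{\st(\tau)}$ is the stabilizer of $\zeta$ in the Weyl group, which permutes the indices $2,\dots,d$ while fixing index $1$. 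So the right-hand side is
\[
    \min_{2 \le j \le d} (a_1 - a_j).
\]
Finally, the hypothesis $y \in V(x,\st(\tau))$ forces $a_1 \ge a_j$ for all $j$ and, more to the point, forces $a_2 = \max_{j \ge 2} a_j$ (this is precisely what it means for the geodesic direction from $x$ to $y$ to lie in $\st(\tau)$ together with $\tau$ itself), so the minimum over $j \ge 2$ of $a_1 - a_j$ is attained at $j = 2$ and equals $a_1 - a_2 = d_\alpha(x,y)$. This proves the two sides agree.

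The main obstacle is bookkeeping rather than conceptual: I need to be careful that the identification of $\mathcal{F}$ with the stated set of Busemann functions really does produce exactly the roots $\{\alpha_1 \circ w\}$ and not some rescaling, which is where the specific normalization of the metric (the factor $\frac12 \Tr(p^{-1}Xp^{-1}Y)$ chosen precisely so that roots and the coroot $Z$ pair correctly) must be invoked; and I must verify that the cone condition $y \in V(x, \st(\tau))$ is equivalent to ``$a_2 \ge a_j$ for all $j \ge 3$'' after the normalization, i.e.\ that a direction in $\st(\tau)$ whose type lies in a chamber adjacent to $\sigma_{mod}$ still has its second-largest diagonal entry in slot $2$ --- this is a statement about which walls of $\sigma_{mod}$ bound $\st(\zeta)$, and it is exactly the wall $\{\alpha_1 = 0\}$ that is \emph{not} crossed. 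Once that combinatorial fact is nailed down, the rest is the linear-algebra identity above.
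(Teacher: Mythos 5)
Your normalization to $x=I_d$, $\tau=[e_1]$, $y=\exp(A).I_d$ with $A=\Diag(a_1,\dots,a_d)$ in the star of $Z$, and your evaluation of the left-hand side as $a_1-\max_{j\ge 2}a_j=\min_{j\ge2}(a_1-a_j)$, are fine. The gap is in your identification of the right-hand side. After this normalization, $\mathcal{F}\cap\st(\tau)$ is not a finite Weyl-group orbit: it consists of all flags $(\langle e_1\rangle,H)$ with $e_1\in H$, a copy of $\KP^{d-2}$. Only the finitely many hyperplanes spanned by standard basis vectors have their ideal points in the visual boundary of the model flat, and only for those is $b$ affine on that flat; a Busemann function is affine on a flat only when the flat is asymptotic to its center, and is merely convex otherwise. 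So your statement that ``as $\xi$ ranges over the ideal points of $\st(\tau)$ this functional ranges over $\{\alpha_1\circ w\}$'' computes the minimum only over a finite subset of the competitors. That yields the inequality $\min_b\,(b(x)-b(y))\le d_\alpha(x,y)$, but the substance of the lemma is the reverse inequality $b(x)-b(y)\ge d_\alpha(x,y)$ for the whole continuum of centers, which your argument does not address.

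To close the gap you need the computation the paper actually performs: for $H=\ker\varphi$ with $\varphi(e_1)=0$ and $y=\begin{bmatrix}\lambda^{d-1}&0\\0&\lambda^{-1}A\end{bmatrix}$, one finds $b(x)-b(y)=\frac12\log\bigl(\lambda^d/(\varphi' A\varphi'^{\dagger})\bigr)$ where $\varphi'$ is the induced unit covector on the last $d-1$ coordinates; since $\varphi'A\varphi'^{\dagger}\le\sigma_1(A)$ this is $\ge\frac12\log(\lambda^d/\sigma_1(A))=d_\alpha(x,y)$ for \emph{every} $b$ in the star, with equality exactly when $\varphi'$ is a top singular covector of $A$, i.e.\ when the center of $b$ shares a flat with $x$ and $y$. (The off-flat restrictions are log-sum-exp expressions in the eigenvalues of $A$, bounded below by the affine ones --- but this must be proved, not assumed.) A smaller slip: $A\in\st(Z)\cap\mathfrak{a}$ forces $a_1=\max_j a_j$ but does \emph{not} force $a_2=\max_{j\ge2}a_j$; this is harmless, since $\min_{j\ge2}(a_1-a_j)=a_1-\max_{j\ge2}a_j=d_\alpha(x,y)$ regardless of which index attains the maximum.
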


\begin{proof}
    Let $e_1,\dots,e_d$ denote the standard basis and $e^1,\dots,e^d$ denote the dual basis.
    Up to the action of $G=\SL(d,\K)$, we may assume that
    $$ x=I_d, \qquad b(x)-b(y) = - \frac12 \log \left(\Abs{e_1}_{y^{-1}} \Abs{e^d}_{y}\right), \quad \text{ and } y = 
    \begin{bmatrix}
    \lambda^{d-1} & 0 \\
    0 & \lambda^{-1}  A
    \end{bmatrix} $$
    for a Hermitian positive definite matrix $A$ of determinant $1$ with $\sigma_1(A) \le \lambda^d$.
    Then $b(x)-b(y) = \frac1{2} \log (\lambda^d/A_{dd}) \ge \frac12 \log (\lambda^d/\sigma_1(A)) = d_\alpha(x,y)$, and equality is achieved when $x,y$ and the center of $b$ lie in a common flat. 
\end{proof}

\subsection{Transversality and parallel sets}\label{sec:parallel sets}

A pair $\tau \in \KPd$ and $\hat{\tau} \in \KPdual$ are called \emph{transverse} (or \emph{antipodal} or \emph{opposite}) if $\tau + \hat{\tau} = \K^d$. 
When this occurs, there is a \emph{parallel set} which may be defined by 
$$ P(\hat{\tau},\tau) \coloneqq \{ p \in \X : S_p \tau = \hat{\tau} \} .$$
Equivalently, if $c$ is a geodesic with $c(+\infty) = \tau$ and $c(-\infty) = \hat{\tau}$ then $P(\hat{\tau},\tau)$ is the union of all points on geodesics parallel to $c$, equivalently, the union of all maximal flats containing the image of $c$.
When the points of $\X$ are interpreted as inner products, the parallel set $P(\hat{\tau},\tau)$ consists of those inner products making $\hat{\tau}$ perpendicular to $\tau$. 
A parallel set is a totally geodesic submanifold of $\X$. 

\begin{example}
    Let $e_1,\dots,e_d$ denote the standard basis.
    Then the parallel set associated to $\hat{\tau} = \mathrm{Span}(\{e_2,\dots,e_d\})$ and $\tau = \mathbb{R}e_1 $ is given by 
    $$ P( \hat{\tau},\tau) = \left\{ \begin{bmatrix}
    \lambda & 0 \\
    0 & A
    \end{bmatrix} \in \X \right\} .$$
\end{example}

\section{Angles and distances to parallel sets}

In this section we establish certain estimates to be used in the following section. 
The primary contribution is the angle-to-distance formula in Lemma \ref{lem:angle-distance formula}. 

\subsection{Angle-to-distance formula}

The following formula generalizes a familiar fact in real hyperbolic geometry. 
Consider a geodesic with endpoints $x,y$ in the visual boundary and a point $p$. 
Then the distance from $p$ to the geodesic $xy$ determines the angle at $p$ between $x$ and $y$, and vice versa. 

\begin{lemma}[Angle-to-distance formula]\label{lem:angle-distance formula}
    Let $q \in \X$, and let $\tau_+ \in \KP^{d-1}$, $\tau_- \in (\KP^{d-1})^\ast$ be transverse and let $P(\tau_-,\tau_+)$ denote the parallel set.
    Then
    $$ (d-1)\cos \measuredangle_q(\tau_-,\tau_+) + d \sech^2 \left( d(q,P(\tau_-,\tau_+) \right)=  1.$$ 
\end{lemma}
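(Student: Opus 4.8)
The plan is to put the configuration into standard position using the $\SL(d,\K)$-action, reducing to an explicit computation in a low-dimensional totally geodesic subspace. By transitivity on transverse pairs we may assume $\tau_+ = [e_1]$ and $\tau_- = [e_2 + \cdots + e_d]^\ast$ (the hyperplane spanned by $e_2, \dots, e_d$), so that $P(\tau_-,\tau_+)$ is the set of block-diagonal inner products $\mathrm{Diag}(\mu, \nu A)$ with $A \gg 0$, $\det A = 1$, making $e_1 \perp \mathrm{span}(e_2,\dots,e_d)$. Both quantities appearing in the identity — the angle $\measuredangle_q(\tau_-,\tau_+)$ and the distance $d(q, P(\tau_-,\tau_+))$ — are invariant under the stabilizer of the pair $(\tau_-,\tau_+)$, and this stabilizer acts on $\X$ with quotient a single geodesic (the "diagonal" direction transverse to $P$). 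So after a further reduction I can assume $q$ lies on a fixed $2$-plane, and in fact the whole problem collapses to computing in the rank-one symmetric space $\SL(2,\K)$-copy (or even the hyperbolic plane) spanned by the geodesic $\tau_-\tau_+$ together with one transverse direction — concretely, $q = \mathrm{Diag}(e^{2s}, e^{-2s/(d-1)} I_{d-1})$ for a real parameter $s$, since the off-diagonal "Iwasawa" directions can be killed by the unipotent part of the stabilizer.

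Next I would compute each side as an explicit function of $s$. For the distance: $P(\tau_-,\tau_+)$ is cut out by the condition $e_1 \perp e_j$, and the nearest-point projection of $q$ onto this totally geodesic submanifold is again block-diagonal; a direct minimization (or the formula $d_\alpha(I_d,x) = \tfrac12\log(\sigma_1/\sigma_2)$ together with the totally geodesic structure) gives $d(q,P(\tau_-,\tau_+))$ as an explicit multiple of $|s|$, so that $\sech^2$ of it becomes a rational function of $e^{2s}$. For the angle: the ideal points $\tau_+ = [c_Z]$ and $\tau_- = [c_{-\iota Z}]$ correspond to explicit unit tangent vectors in $T_q\X$ via the exponential map, and using the metric $\metric{X}{Y}_q = \tfrac12\Tr(q^{-1}Xq^{-1}Y)$ I can write $\cos\measuredangle_q(\tau_-,\tau_+)$ as a normalized trace, again a rational function of $e^{2s}$. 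Then the claimed identity $(d-1)\cos\measuredangle_q + d\,\sech^2(d(q,P)) = 1$ becomes a single algebraic identity in the one variable $e^{2s}$, which I verify by direct substitution. The constants $d-1$ and $d$ should emerge precisely from the eigenvalue multiplicities (one versus $d-1$) of $Z$ and from the normalization $\zeta_0 = \sqrt{d/2(d-1)}$, $d_\alpha(Z) = \zeta_0$ recorded earlier.

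The main obstacle I anticipate is bookkeeping the two normalizations simultaneously: the angle is measured with the Riemannian metric $\tfrac12\Tr(q^{-1}\cdot q^{-1}\cdot)$, while the distance to the parallel set is a Riemannian distance in the same metric but most naturally computed through the $\mathfrak{a}$-picture where $A,B \mapsto 2\Tr(AB)$ is the isometric identification — so I must be careful that the factor of $2$ (and the $1/d$ relating this metric to the Killing metric) lands consistently on both sides. Getting the tangent vectors representing $\tau_\pm$ correctly normalized to unit speed, and correctly oriented (one in the $+Z$ family, one in the $-\iota Z = $ dual family, which are *not* antipodal in $\sigma_{mod}$ — this is exactly the $\iota$-non-invariance the introduction flags), is where a sign or a factor is most likely to slip. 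A secondary check worth doing is the sanity limit: as $q \to P(\tau_-,\tau_+)$ the distance goes to $0$, $\sech^2 \to 1$, so the identity forces $\cos\measuredangle_q(\tau_-,\tau_+) \to (1-d)/(d-1) = -1$, i.e. the angle tends to $\pi$ — which is correct, since on the parallel set $q$ sees $\tau_+$ and $\tau_-$ as the two endpoints of a geodesic through (near) $q$. This limiting behavior both guides the computation and confirms the placement of the constants.
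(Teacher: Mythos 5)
There is a genuine gap in your reduction to standard position, and it is fatal to the computation as set up. The point you propose to reduce to, $q = \Diag(e^{2s}, e^{-2s/(d-1)} I_{d-1})$, is block-diagonal, hence it makes $[e_1]$ orthogonal to $\mathrm{span}(e_2,\dots,e_d)$ --- that is, it lies \emph{on} the parallel set $P(\tau_-,\tau_+)$. The parameter $s$ moves $q$ along the geodesic $\tau_-\tau_+$ inside $P$, not transverse to it, so your computation would only ever see $d(q,P)=0$ and $\measuredangle_q(\tau_-,\tau_+)=\pi$: the identity holds trivially and the $\sech^2$ dependence never appears. The underlying error is the claim that ``the off-diagonal Iwasawa directions can be killed by the unipotent part of the stabilizer'': the stabilizer of a \emph{transverse pair} $(\tau_-,\tau_+)$ is the Levi subgroup $S(\GL_1\times\GL_{d-1})$, which is reductive and has no unipotent part; a unipotent element fixing $\tau_+$ necessarily moves $\tau_-$. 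If one could kill those directions while fixing both simplices, $d(\cdot,P)$ would be identically zero. The correct normal direction to $P$ is precisely the off-diagonal one: in the paper the point is written as $g(t).p$ with $g(t)^{-1}$ the transvection having $\cosh t$ on the diagonal and $\sinh t$ off the diagonal of the top $2\times 2$ block, and $d(p,g(t).p)=2t$ realizes the distance to $P$.

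Your broader strategy (reduce by symmetry to an $\SL(2,\K)$-copy containing the geodesic $\tau_-\tau_+$ and one normal direction, then verify an algebraic identity in one variable) is the right one and matches the paper's, but you would also need to address the step you gloss over: once $q=g(t).p$ is off the parallel set, identifying the unit tangent directions at $q$ toward $\tau_\pm$ is the substantive computation. The paper does this by pulling the angle back to $p$ and applying the Iwasawa decomposition of $g(t)^{-1}$ relative to $G_{\tau_-}$ and $G_{\tau_+}$ \emph{separately}, producing two different rotations $k_\pm(t)$, and then evaluating $\cos\measuredangle_p(k_-\tau_-,k_+\tau_+)$ as a normalized trace. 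Your sanity check at the end (angle tends to $\pi$ as $q\to P$) is correct, but it cannot detect this error since your parametrization never leaves $P$.
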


\begin{proof}
    Let $p \in P=P(\tau_-,\tau_+)$ and let $c(t)$ be a geodesic through $p$ normal to $P$. 
    Up to the action of $G = \SL(d,\K)$ and rescaling the speed of $c$, we may assume that $c(t)$ is given by $c(t)=g(t).p$ where 
    $$ g(t)^{-1}=\begin{bmatrix}  \cosh t & \sinh t &  &  &  \\
                        \sinh t & \cosh t &  &  &  \\
                        & & 1 & & \\
                        & & & \ddots & \\                       
                        & & & & 1 \\
                \end{bmatrix}.
    $$ 
    To compute the angle, we want to use that 
    $$ \measuredangle_{g(t)p}(\tau_-,\tau_+) = \measuredangle_{p}(g(t)^{-1}\tau_-,g(t)^{-1}\tau_+) = \measuredangle_{p}(k_-(t)\tau_-,k_+(t)\tau_+) $$
    where $g(t)^{-1}=k_-p_-$ according to $G=KG_{\tau_-}$ and $g(t)^{-1}=k_+p_+$ according to $G=KG_{\tau_+}$; this is called the Iwasawa or ``QR" decomposition.
    Explicitly, the top $2 \times 2$ blocks are given by:
        $$  \frac1{\sqrt{\cosh(2t)}}\begin{bmatrix}  
            \cosh t & -\sinh t \\
            \sinh t & \cosh t 
            \end{bmatrix}, \quad
            \frac1{\sqrt{\cosh(2t)}}\begin{bmatrix}  
            \cosh t & \sinh t \\
            -\sinh t & \cosh t 
            \end{bmatrix}, 
        $$ 
    for $k_+,k_-$ respectively.
    We have 
    $$ k_+^Tk_- = \begin{bmatrix}  
                         \sech(2t) & \tanh(2t) &  &  &  \\
                        -\tanh(2t) & \sech(2t) &  &  &  \\
                        & & 1 & & \\
                        & & & \ddots & \\                       
                        & & & & 1 \\
                    \end{bmatrix}, \text{ and } 
        k_-^Tk_+ = \begin{bmatrix}  
                        \sech(2t) & -\tanh(2t) &  &  &  \\
                        \tanh(2t) & \sech(2t) &  &  &  \\
                        & & 1 & & \\
                        & & & \ddots & \\                   
                        & & & & 1 \\
                    \end{bmatrix}, $$
    and for $Z = \Diag(d-1,-1,\dots,-1)$ the computation
    $$ \cos \measuredangle_p(k_-(t) \tau_-,k_+(t) \tau_+) = \frac{\Tr(k_- (-Z)k_-^Tk_+Zk_+^T)}{\Tr(Z^2)} = \frac1{d-1} \left( 1-d \sech^2(2t) \right) $$
    is straightforward.
    Our metric yields $d(p,c(t)) = d(p,g(t).p)=\sqrt{2 \Tr(X^2)}t =2t$ where $X$ is the unique symmetric matrix such that $g(t) = \exp(tX)$. 
\end{proof}

\begin{remark}\label{rem:no formula}
    Such a formula does not hold for parallel sets in symmetric spaces in general. 
    For example, if $2 \le k \le d-2$, $\tau \in \Gr(k,\K^d)$, and $\hat{\tau} \in \Gr(d-k,\K^d)$ is transverse to $\tau$, then the distance from $p \in \X$ to the parallel set $P(\hat{\tau},\tau)$ does not determine the angle $\measuredangle_p(\hat{\tau},\tau)$, and vice versa.
    Similarly, such a formula does not hold for maximal flats in $\X$ for any $d\ge 3$. 
\end{remark}

\begin{lemma}[Detecting transversality]\label{lem:angle to transversality}
    Let $q \in \X$, $\tau_+ \in \KPd$ and $\tau_- \in \KPdual$.
    Then $\tau_+$ is transverse to $\tau_-$ if and only if
    $$ \cos \measuredangle_q (\tau_-,\tau_+) < \frac1{d-1} .$$
\end{lemma}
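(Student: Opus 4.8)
The plan is to reduce to the basepoint $q = I_d$ and compute the angle directly, obtaining an exact formula for $\cos\measuredangle_q(\tau_-,\tau_+)$ in terms of a single invariant of the pair, from which the stated dichotomy is transparent.

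Since $\SL(d,\K)$ acts transitively on $\X$ and preserves Riemannian angles, transversality, and the embeddings $\KPd,\KPdual\hookrightarrow\partial_{\rm vis}\X$, we may assume $q = I_d$. Interpreting $I_d$ as the standard Hermitian inner product on $\K^d$, write $\tau_+ = [v]$ for a unit vector $v$ and $\tau_- = u^\perp$ for a unit vector $u$; then $\tau_+$ and $\tau_-$ are transverse if and only if $\langle u,v\rangle \neq 0$. The first task is to write down the unit tangent vectors at $I_d$ pointing toward the two ideal points. Because the orbit map $g\mapsto g.I_d$ has derivative equal to multiplication by $2$ on $T_{I_d}\X\subset\Herm(d,\K)$, the geodesic ray from $I_d$ toward $[v]$ has initial velocity a positive multiple of the traceless matrix $vv^\dagger - \tfrac1d I$, whose spectrum $\big(\tfrac{d-1}{d},-\tfrac1d,\dots,-\tfrac1d\big)$ is proportional to that of $Z$ --- confirming the ideal point has type $\zeta$ --- and normalizing in the metric $\metric{X}{Y}_{I_d} = \tfrac12\Tr(XY)$ gives $u_+ = \sqrt{\tfrac{2d}{d-1}}\big(vv^\dagger - \tfrac1d I\big)$. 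The analogous computation for the hyperplane $u^\perp$, whose ideal point has type $\iota\zeta$, gives $u_- = \sqrt{\tfrac{2d}{d-1}}\big(\tfrac1d I - uu^\dagger\big)$.

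A short trace computation, using $\Tr(uu^\dagger vv^\dagger) = |\langle u,v\rangle|^2$, then yields
\[
\cos\measuredangle_{I_d}(\tau_-,\tau_+) \;=\; \metric{u_-}{u_+}_{I_d} \;=\; \tfrac12\Tr(u_-u_+) \;=\; \frac{1}{d-1} - \frac{d}{d-1}\,|\langle u,v\rangle|^2 .
\]
The claim follows at once: the right-hand side is $<\tfrac1{d-1}$ exactly when $|\langle u,v\rangle|\neq 0$, i.e.\ exactly when the pair is transverse (and in the non-transverse case the angle is precisely $\arccos\tfrac1{d-1}$). If one prefers to reuse earlier work, the forward implication requires no new computation: for transverse pairs Lemma \ref{lem:angle-distance formula} gives $(d-1)\cos\measuredangle_q = 1 - d\sech^2\!\big(d(q,P(\tau_-,\tau_+))\big) < 1$ since $\sech^2 > 0$; and for the converse it suffices to check the single standard non-transverse configuration $\tau_+ = [e_1]$, $\tau_- = e_d^\perp$, where the two directions are multiples of $Z$ and $\iota Z$ and $\cos\measuredangle_q = \Tr(Z\cdot\iota Z)/\Tr(Z^2) = \tfrac1{d-1}$.

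The only thing to watch is bookkeeping: fixing the normalizations of $u_\pm$ and keeping the Hermitian-conjugate conventions straight in $\Tr(uu^\dagger vv^\dagger) = |\langle u,v\rangle|^2$. Conceptually there is nothing new --- this is essentially the computation in the proof of Lemma \ref{lem:angle-distance formula}, now recording its value at the degenerate endpoint rather than along a normal geodesic.
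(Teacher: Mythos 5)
Your computation is correct, and it takes a genuinely different route from the paper. The paper proves the forward implication by citing Lemma \ref{lem:angle-distance formula} (for transverse pairs, $(d-1)\cos\measuredangle_q = 1 - d\sech^2(d(q,P)) < 1$) and the converse by a relative-position argument in the Tits building: there are only two relative positions for a line--hyperplane pair, the Tits angle takes cosine values in $\{ \tfrac1{d-1}, -1\}$, and $\cos\measuredangle_q < \tfrac1{d-1}$ forces the Tits angle to be $\pi$, hence antipodality. Your approach instead normalizes $q = I_d$ and computes the initial velocities of the two rays explicitly, arriving at the exact identity $\cos\measuredangle_{I_d}(u^\perp,[v]) = \tfrac1{d-1} - \tfrac{d}{d-1}\lvert\langle u,v\rangle\rvert^2$ for unit vectors $u,v$; I checked the normalizations ($u_\pm$ are indeed unit for the metric $\tfrac12\Tr(p^{-1}Xp^{-1}Y)$ at $p=I_d$, and $u_+$ is conjugate to $2Z$) and the trace computation, and both sides of the equivalence then fall out of one formula. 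What your route buys is a sharper, self-contained quantitative statement --- combined with Lemma \ref{lem:angle-distance formula} it even gives $\sech(d(q,P(\tau_-,\tau_+))) = \lvert\langle u,v\rangle\rvert$ --- at the cost of being specific to the line--hyperplane case, whereas the paper's Tits-building argument for the converse is the one that generalizes conceptually to other flag types. One small remark on your fallback argument: reducing the non-transverse case to the single configuration $\tau_+=[e_1]$, $\tau_-=e_2^\perp$ (note: the incident configuration, so the line must lie in the hyperplane) implicitly uses that $\SU(d,\K)$ acts transitively on incident pairs; this is standard but worth a word. Your primary argument needs no such reduction and is complete as written.
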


In the proof we use the \emph{angular metric} $\measuredangle(x,y)$ on the visual boundary of $\X$, which may be defined as the supremum of all $\measuredangle_{p}(x,y)$ for $p \in \X$.\footnote{The length metric associated to the angular metric is known as the Tits metric \cite{BridsonHaefliger1999metricspaces,eberlein96}. When $d \ge 3$ the angular metric coincides with the Tits metric.}  
The angular metric is a $G$-invariant metric on the visual boundary of $\X$, but the topology it induces is much finer than the visual topology.
We note when a maximal flat contains $p$ and $x,y$ in its boundary, the angle between $x$ and $y$ is achieved at $p$. 

\begin{proof}
    If $\tau_+$ is transverse to $\tau_-$ then this follows from Lemma \ref{lem:angle-distance formula}.

    For the converse, observe that there are only two relative positions for a line and a hyperplane (either the line is in the hyperplane, or the line is transverse to the hyperplane). 
    In fact any angle between ideal points of these fixed types satisfies
    $$ \cos \measuredangle(\tau_-,\tau_+) \in \left\{ \frac1{d-1}, -1 \right\} .$$
    Now suppose that $ \cos \measuredangle_q (\tau_-,\tau_+) < \frac1{d-1},$ so the angle satisfies between $\tau_-$ and $\tau_+$ satisfies the same inequality.
    But then there exists a point $p \in \X$ whose Riemannian angle between $\tau_-$ and $\tau_+$ is $\pi$, so they are transverse. 
\end{proof}

\subsection{From rays to parallel sets}

In the next Lemma, we consider a parallel set and an asymptotic geodesic ray. 
Points far along the ray become exponentially close to the parallel set, with rate depending on the $d_\alpha$-pseudometric. 

\begin{lemma}[Distance from ray to parallel set]\label{lem:spacing to distance}
    Fix $D,S \ge 0$.
    Suppose $p,q \in \X$ satisfy $d_\alpha(p,q) \ge S$ and set $\tau = \zeta(pq)$. 
    Let $\hat{\tau}$ be transverse to $\tau$ and suppose the parallel set $P=P(\hat{\tau},\tau)$ satisfies $d(p,P) \le D$.
    Then 
    $$ d(q,P) \le \min \{ D, (e^D-1)e^{-S}\} .$$    
\end{lemma}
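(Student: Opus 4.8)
I read the statement with $x=p$ and $y=q$, so that $\tau=\zeta(pq)$ and $q$ lies ahead of $p$ in the direction $\tau$. The plan is to reduce the whole estimate to a single closed-form formula for the distance from a point to a line--hyperplane parallel set, and then to read the exponential decay off from the singular values of $q$ relative to $p$.

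First I would record the following formula: writing $\tau=[v]$ and $\hat\tau=\ker\phi$, for every $Y\in\X$
\[
\cosh^2 d\bigl(Y,P(\hat\tau,\tau)\bigr)=\frac{(v^\dagger Y^{-1}v)\,(\phi\, Y\,\phi^\dagger)}{|\phi v|^2}.
\]
To prove it I would compute the Riemannian angle at the basepoint directly: for unit $v$ and $\phi$, the tangent directions at $I_d$ toward $\tau$ and toward $\hat\tau$ are spanned by the trace-free symmetric matrices $d\,vv^\dagger-I_d$ and $I_d-d\,\phi^\dagger\phi$ (the $\zeta$- and $\iota\zeta$-shaped matrices aligned with $v$ and $\phi$), and a short trace computation gives $\cos\measuredangle_{I_d}(\hat\tau,\tau)=\tfrac1{d-1}(1-d|\phi v|^2)$. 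Conjugating by the isometry $Y^{-1/2}$ (which sends $Y$ to $I_d$) gives the angle at a general point — the normalizations $v^\dagger Y^{-1}v$ and $\phi Y\phi^\dagger$ enter because $Y^{-1/2}$ rescales $v$ and $\phi$ — and feeding this into the angle-to-distance formula (Lemma~\ref{lem:angle-distance formula}) yields the displayed identity. The one point to be careful about is the placement of $Y$ versus $Y^{-1}$; with these swapped the estimate below runs the wrong way.

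Next I would normalize: by $\SL(d,\K)$-invariance assume $p=I_d$, and put the segment $pq$ into standard position, $q=k\,\Diag(\lambda_1,\dots,\lambda_d)\,k^\dagger$ with $k\in\SU(d,\K)$ and $\lambda_1>\lambda_2\ge\cdots\ge\lambda_d>0$. Then $\sigma_i(q)=\lambda_i$, so $d_\alpha(p,q)=\tfrac12\log(\lambda_1/\lambda_2)\ge S$ and $\tau=\zeta(pq)=[ke_1]$; take $v=ke_1$ and a $\phi$ with $\ker\phi=\hat\tau$, scaled so that $\|v\|=\|\phi\|=1$, and set $\psi=\phi k$, again a unit covector. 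The three ingredients of the formula become $v^\dagger q^{-1}v=\lambda_1^{-1}$, $\phi q\phi^\dagger=\sum_i|\psi_i|^2\lambda_i$ and $|\phi v|=|\psi_1|$, so that
\[
\sinh^2 d(q,P)=\cosh^2 d(q,P)-1=\frac{1}{|\psi_1|^2}\sum_{i\ge 2}|\psi_i|^2\,\frac{\lambda_i}{\lambda_1}.
\]
The same formula at $p=I_d$ reads $\cosh^2 d(p,P)=|\psi_1|^{-2}$, so the hypothesis $d(p,P)\le D$ is precisely $|\psi_1|^2\ge\sech^2 D$, equivalently $\sum_{i\ge2}|\psi_i|^2\le\tanh^2 D$; and $\lambda_i/\lambda_1\le\lambda_2/\lambda_1\le e^{-2S}$ for every $i\ge 2$.

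Finally I would assemble these: the three inequalities just obtained give $\sinh^2 d(q,P)\le e^{-2S}\tanh^2 D/\sech^2 D=e^{-2S}\sinh^2 D$, hence $\sinh d(q,P)\le e^{-S}\sinh D$. Since $S\ge0$ this is at most $\sinh D$, so $d(q,P)\le D$; and since $t\le\sinh t$ for $t\ge0$ and $\sinh D\le e^D-1$ (because $e^D+e^{-D}\ge2$), it is also at most $e^{-S}(e^D-1)$. Taking the minimum of the two bounds finishes the proof. I expect essentially all of the work to sit in the first step — establishing the distance formula and pinning down its conventions; the remaining steps are short and elementary.
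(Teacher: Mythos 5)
Your proof is correct, and it takes a genuinely different route from the paper. The paper gets the bound $D$ from convexity of the distance function and the bound $(e^{D}-1)e^{-S}$ from a horocyclic-curve argument imported from \cite[Lemma 4.10]{Rie21}: a curve from $p$ to its horocyclic projection onto $P$ is flowed toward $\tau$, its speed contracts by $e^{-S}$, and integrating $e^{t-S}$ over $[0,D]$ produces the constant. You instead derive the exact identity $\cosh^{2}d(Y,P(\hat\tau,\tau))=(v^{\dagger}Y^{-1}v)(\phi Y\phi^{\dagger})/|\phi v|^{2}$ (which I checked: the tangent directions $dvv^{\dagger}-I_d$ and $I_d-d\phi^{\dagger}\phi$ are the correct $\zeta$- and $\iota\zeta$-representatives, the trace computation gives $\cos\measuredangle_{I_d}=\frac{1}{d-1}(1-d|\phi v|^{2})$, and conjugating by $Y^{-1/2}$ transforms $v$ and $\phi$ exactly as you say), and then the estimate reduces to elementary manipulations with $|\psi_1|^{2}\ge\sech^{2}D$ and $\lambda_i/\lambda_1\le e^{-2S}$. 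Your route is self-contained, exploits the special line--hyperplane structure that the paper itself emphasizes is what makes closed formulas available in this setting, and actually yields the sharper conclusion $\sinh d(q,P)\le e^{-S}\sinh D$, from which both terms of the stated minimum follow at once; the paper's horocycle argument is less sharp here but generalizes to Weyl cones and parallel sets of other types where no such formula exists. (Minor point: the statement's $p,q$ versus $x,y$ is a typo in the paper, and your reading $x=p$, $y=q$ is the intended one.)
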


\begin{proof}
    The upper bound of $D$ is immediate from the convexity of the distance function. 

    The other bound is a mild improvement of \cite[Lemma 4.11]{riestenberg2024quantified}.
    We recall the setup there.
    For $\tau = \mathbb{R}e_1 \in \KP^{d-1}$, choose a basis $\{e_1,\dots,e_d\}$ containing $e_1$. 
    In this basis, the \emph{horocyclic subgroup} associated to $\tau$ is written as
        \[ N_\tau = \left\{ \begin{bmatrix}
                                1 & v^\dagger \\
                                0 & I_{d-1}
                                \end{bmatrix} : v \in \K^{d-1} \right\} \]
    where $I_{d-1}$ denotes the $(d-1) \times (d-1)$ identity matrix. 
    For $x \in \X$, the \emph{horocycle containing $p$ centered at $\tau$} is the orbit $H(x,\tau) = N_\tau .x$ of $\X$. 
    
    Let $r$ be the unique point on $P$ in the horocycle $H(p,\tau)$.
    Produce a horocyclic curve $r_0$ from $p$ to $r$, and then push it towards $\tau$ to obtain a horocyclic curve $r_\ell$ from $q$ to $P$.
    The proof of \cite[Lemma 4.11]{riestenberg2024quantified} provides the bound 
        \[\abs{\dot{r_\ell}(t)} \le e^{-S} \abs{\dot{r_0}(t)} \le e^{t-S} \]
    which we integrate over $t$ from $0$ to $D$ to estimate the length of $r_\ell$ and bound $d(q,P)$.
\end{proof}

\subsection{Controlling \texorpdfstring{$\zeta$}{zeta}-angles}

Kapovich-Leeb-Porti \cite{KLP14} introduced the \emph{$\zeta$-angle} which modifies the Riemannian angle by replacing regular directions with a direction in the same Weyl chamber but with type $\zeta$. 
We need a slight modification of their notion, because we work with a model simplex that is not invariant by the opposition involution $\iota$. 

\begin{definition}[$\zeta$-angle]\label{def:zeta angles}
    Let $x,y,z,w \in \X$ such that $xy$ is $\iota \zeta$-regular and $xz,xw$ are $\zeta$-regular and let $\hat{\tau} \in (\mathbb{KP}^{d-1})^\ast$ and $\tau \in \mathbb{KP}^{d-1}$.
    Let $\zeta(xz)$ (resp.\ $\zeta(xw)$) denote the unique ideal point of type $\zeta$ in a common Weyl chamber with $xz(+\infty)$ (resp.\ $xw(+\infty)$) and let $\iota\zeta(xy)$ denote the unique ideal point of type $\iota \zeta$ in a common Weyl chamber with $xy(+\infty)$.
    Then set:
    $$ \measuredangle^{\iota \zeta,\zeta}_{x}(y,z) \coloneqq \measuredangle_{x}(\iota\zeta(xy),\zeta(xz)) , \quad \measuredangle^{\zeta,\zeta}_{x}(z,w) \coloneqq \measuredangle_{x}(\zeta(xz),\zeta(xw)) ,$$
    $$ \measuredangle^{\iota \zeta,\zeta}_{x}(\hat{\tau},z) \coloneqq \measuredangle_{x}(\hat{\tau},\zeta(xz)) , \quad \measuredangle^{\iota\zeta,\zeta}_{x}(y,\tau) \coloneqq \measuredangle_{x}(\iota\zeta(xy),\tau) .$$
\end{definition}
See Figure \ref{fig:zeta directions}.

\begin{figure}
    \centering
    \begin{tikzpicture}
    \draw (-4,0) -- (4,0);
    \draw (-2,-3.46) -- (2,3.46);
    \draw (-2,3.46) -- (2,-3.46);
    \node[circle,scale=0.5,fill=black,label=below:$w$] (w) at (2.98,-0.3) {};
    \node[circle,scale=0.5,fill=black,label={[label distance=1mm]-10:$x$}] (x) at (0,0) {};
    \node[circle,scale=0.5,fill=black,label=left:$y$] (y) at (-1.98,-2.24) {};
    \node[circle,scale=0.5,fill=black,label=right:$z$] (z) at (2.6,1.5) {};
    \node[label=right:{$\zeta(xz)$}] (zxz) at (1.5,2.6) {};
    \node[label=right:{$\zeta(xw)$}] (zxw) at (1.5,-2.6) {};
    \node[scale=0.5,label=above:{$\iota\zeta(xz)=\iota\zeta(xw)$}] (izxz) at (3,0) {};
    \node[label=above:{$\zeta(xy)$}] (zxy) at (-3,0) {};
    \node[label=right:{$\iota\zeta(xy)$}] (izxy) at (-1.5,-2.6) {};
    \draw[-{latex[scale=2.5,length=2,width=3]},very thick] (x) -- (zxz);
    \draw[-{latex[scale=2.5,length=2,width=3]},very thick] (x) -- (izxz);
    \draw[-{latex[scale=2.5,length=2,width=3]},very thick] (x) -- (zxy);
    \draw[-{latex[scale=2.5,length=2,width=3]},very thick] (x) -- (izxy);
    \draw[-{latex[scale=2.5,length=2,width=3]},very thick] (x) -- (zxw);
    \end{tikzpicture}
    \caption{Points $w,x,y,z$ in a maximal flat and corresponding directions of types $\zeta$ and $\iota \zeta$.}
    \label{fig:zeta directions}
\end{figure}

\medskip

We conclude this section with an estimate controlling $\zeta$-angles between $x,y \in \X$ as seen from $o \in \X$ which is useful when the pseudodistances $d_\alpha(o,x),d_\alpha(o,y)$ are larger than $d(x,y)$.
Recall that $K = \SU(d,\K)$ is the stabilizer of $I_d$ in $G$.

\begin{lemma}[$\zeta$-angle estimate]\label{lem:lipschitz constant of zeta}
    Let $o \in \X$ and $S>D \ge0$. 
    The map 
    $$ f = \zeta(o\cdot) \colon \{x \in \X : d_\alpha(o,x) >0 \} \to (K\cdot Z,\measuredangle)$$ 
    is smooth, 
    and for $x,y \in \X$ such that $d_\alpha(o,x)\ge S$ and $d(x,y) \le D$ we have 
    $$ \measuredangle_o(\zeta(ox),\zeta(oy)) \le \sqrt{\frac{d}{2(d-1)}} \frac{d(x,y)}{\sinh(S-D)}.$$
\end{lemma}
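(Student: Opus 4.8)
The plan is to realize $f$ concretely, bound its differential pointwise, and integrate that bound along the geodesic from $x$ to $y$. Moving $o$ to $I_d$ by the $G$-action, the identity $d_\alpha(I_d,x)=\tfrac12\log(\sigma_1(x)/\sigma_2(x))$ shows that the domain $\{d_\alpha(o,\cdot)>0\}$ consists exactly of the points of $\X$ whose top eigenvalue is simple, and there $\zeta(ox)$ is the top eigenline of $x$. Under the identification $\KPd\cong K\cdot Z$, this means $f(x)$, regarded as the unit initial velocity at $I_d$ of the geodesic ray toward $\zeta(ox)$, equals $\tfrac{2}{\sqrt{2d(d-1)}}\bigl(d\,u(x)u(x)^\dagger-I_d\bigr)\in S(T_{I_d}\X)$, where $u(x)$ is a unit top eigenvector; in particular $f(x)=2Z$ when $x=\exp(2A)$ with $A\in\mathfrak{a}^+$. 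Smoothness of $f$ is then the standard fact that a simple eigenvalue of a Hermitian matrix, together with its eigenline, depends analytically on the matrix.

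The core claim I would prove is the pointwise estimate
\[ \Abs{df_x(v)}_{I_d}\ \le\ \frac{\alpha(Z)}{\sinh\bigl(d_\alpha(o,x)\bigr)}\,\Abs{v}_x\qquad(v\in T_x\X). \]
Granting it, the lemma follows by integration. Let $\gamma\colon[0,\ell]\to\X$ be the unit-speed geodesic from $x$ to $y$, so $\ell=d(x,y)\le D$. Since $d_\alpha$ is a pseudometric dominated by $d$, for $t\in[0,\ell]$ we get $d_\alpha(o,\gamma(t))\ge d_\alpha(o,x)-t\ge S-t\ge S-D>0$, so $\gamma$ stays in the domain of $f$, and using that $\measuredangle_o(\zeta(ox),\zeta(oy))=\arccos\metric{f(x)}{f(y)}_{I_d}$ is at most the length of the curve $f\circ\gamma$ in $S(T_{I_d}\X)$,
\[ \measuredangle_o(\zeta(ox),\zeta(oy))\ \le\ \int_0^\ell\Abs{df_{\gamma(t)}(\gamma'(t))}_{I_d}\,dt\ \le\ \int_0^\ell\frac{\alpha(Z)}{\sinh(S-t)}\,dt\ \le\ \frac{\alpha(Z)\,\ell}{\sinh(S-D)}. \]

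For the pointwise estimate I would use $K$-equivariance of $f$ together with the fact that $K$ acts on $\X$ by isometries fixing $o$ to reduce to $x=\exp(2A)$ with $A=\Diag(a_1,\dots,a_d)\in\mathfrak{a}^+$ and $a_1-a_2=d_\alpha(o,x)>0$, so $f(x)=2Z$ and $u(x)=e_1$. First-order (Rayleigh--Schr\"odinger) perturbation of the simple eigenvalue $e^{2a_1}$ of $x$ produces the infinitesimal motion $z_j\coloneqq v_{j1}/(e^{2a_1}-e^{2a_j})$ of the top eigenvector in the directions $e_j$ ($2\le j\le d$), and differentiating the formula for $f$ yields $\Abs{df_x(v)}_{I_d}^2=4\alpha(Z)^2\sum_{j=2}^d|z_j|^2$. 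I would then bound the sum using the identity $e^{2a_1+2a_j}/(e^{2a_1}-e^{2a_j})^2=1/\bigl(4\sinh^2(a_1-a_j)\bigr)$ and the inequality $a_1-a_j\ge a_1-a_2>0$ (valid for every $j\ge2$), together with the constraint $\sum_{j=2}^d e^{-2a_1-2a_j}|v_{j1}|^2\le\Abs{v}_x^2$ read off from the Riemannian metric at $x$; this gives $\sum_j|z_j|^2\le\Abs{v}_x^2/\bigl(4\sinh^2(a_1-a_2)\bigr)$ and hence the claim.

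The argument is mostly bookkeeping rather than a conceptual difficulty, and the main thing to get right is the normalization: one must recognize that the type-$\zeta$ ideal point $\zeta(ox)$ corresponds to the initial velocity $2Z$ of $c_Z$ in $S(T_{I_d}\X)$, not $Z$ — this factor is exactly what makes the constant come out to $\alpha(Z)$ rather than $\tfrac12\alpha(Z)$ — and that only the \emph{top} eigenvalue of $x$ is required to be simple, so that $\sinh(a_1-a_j)\ge\sinh(a_1-a_2)$ holds uniformly over $j\ge2$ even when the lower eigenvalues coincide. (As a check, for $d=2$ this recovers the familiar $\mathbb H^2$ estimate, with the worst-case $v$ supported on the $(1,2)$ entry giving equality.)
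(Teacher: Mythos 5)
Your proof is correct, and its overall skeleton coincides with the paper's: establish a pointwise bound $\abs{df_q} \le \alpha(Z)/\sinh(d_\alpha(o,q))$ on the differential, observe that the geodesic $xy$ stays in $\{q : d_\alpha(o,q) \ge S-D\}$, and integrate. The difference is in how the pointwise bound is obtained. The paper reduces to a vector of the form $U^\ast_q$ with $U \in \mathfrak{k}^{f(q)}$ via the orthogonal splitting $T_q\X = \ker(df_q) \oplus \mathrm{ev}_q(\mathfrak{k}^{f(q)})$, and then compares $\Abs{U^\ast_Z}^2 = \alpha(Z)^2\Abs{U}^2$ with $\abs{U^\ast_q}^2 \ge \sinh^2(d_\alpha(o,q))\Abs{U}^2$ via the root-space decomposition; you instead diagonalize $x=\exp(2A)$ by $K$-equivariance and compute $df_x$ directly by first-order perturbation of the simple top eigenvector, with the identity $e^{-2a_1-2a_j}(e^{2a_1}-e^{2a_j})^2 = 4\sinh^2(a_1-a_j)$ playing the role of the paper's $e^{\beta}-e^{-\beta}$ factors. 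The two computations are equivalent (your $z_j$ are the coefficients of the paper's $U_\beta$), and your normalizations check out: $4\alpha(Z)^2 = 2d/(d-1)$ matches the norm of the derivative of $u \mapsto \frac{2}{\sqrt{2d(d-1)}}(d\,uu^\dagger - I_d)$, and you correctly use only $a_1-a_j \ge a_1-a_2$, so no assumption on the lower spectrum is needed. Your route buys concreteness and a self-contained smoothness argument (analytic perturbation of a simple eigenvalue) at the cost of a coordinate computation; the paper's is coordinate-free and generalizes more readily to other flag manifolds. One small point worth making explicit if you write this up: the inequality $d_\alpha(o,\gamma(t)) \ge S - t$ uses that $d_\alpha$ is $1$-Lipschitz with respect to $d$ (a consequence of the vector-valued triangle inequality), which the paper also invokes implicitly in its last line.
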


\begin{proof}
    Since $G=\SL(d,\K)$ acts transitively by isometries we may assume that $o=I_d$.
    Now observe that the open Weyl cone
    $$ V = \left\{ q = \begin{bmatrix} \lambda & 0 \\ 0 & A \end{bmatrix} : \lambda > \sigma_1(A) \right\} $$
    is the preimage of $Z$ under $f$. 
    Since $V$ is an open subset of a parallel set, it is a smooth submanifold of $\X$.
    Let 
    $$ \mathfrak{k}^Z = \left\{ U = \begin{bmatrix} 0 & -u^\dagger \\ u & 0 \end{bmatrix} : u \in \mathbb{K}^{d-1} \right\} $$
    which is the Killing form perpendicular of $\mathfrak{k}_Z$, the infinitesimal stabilizer of $Z$ in $\mathfrak{k} = \mathfrak{su}(d,\K)$. 
    It is easy to check that for all $q$ in $V$ and $U \in \mathfrak{k}^Z$, the fundamental vector field $U^\ast_q$ is orthogonal to $T_qV$. 
    Mapping $K \times V$ by $(k,v)\mapsto kv$ descends to a $K$-equivariant diffeomorphism between $K \times_{K_Z} V$ and $X=\{q \in \X : d_\alpha(o,q) >0 \}$.
    It follows that $f$ is smooth and that $T_qV = \ker(df_q)$.

    We use that for a $C^1$ function between Riemannian manifolds, $f \colon X \to K\cdot Z$, the optimal Lipschitz constant is given by $\sup \left\{ \abs{ df_q} : q \in X \right\}$.
    Since the action of $K$ on $K\cdot Z$ is transitive and $f$ is equivariant, for any $v \in T_qX$ there exists $U \in \mathfrak{k}^{f(q)}$ such that $df_q(v)=U^\ast_{f(q)} = df_q(U^\ast_q)$.
    It is convenient to introduce the alternative notation $\mathrm{ev}_q(U) = U^\ast_q$ for the fundamental vector field (read ``evaluated at $q$").
    Since the decomposition $T_qX = \ker(df_q) \oplus \mathrm{ev}_q(\mathfrak{k}^{f(q)})$ is orthogonal, we have $\abs{v} \ge \abs{U^\ast_q}$, so 
    $$ \frac{\abs{df_q(v)}}{\abs{v}} \le \frac{\abs{U^\ast_{f(q)}}}{\abs{U^\ast_q}}.$$
    
    For a matrix $B$ let $\Abs{B}^2= 2\Tr(B^\dagger B)$. 
    For $U \in \mathfrak{k}^Z$ write $U = \sum_\beta U_\beta$ for its root space decomposition; this agrees with its decomposition into matrix entries, and note this decomposition is orthogonal with respect to $\Abs{\cdot}$.
    We recall that $Z$ is a diagonal matrix proportional to $\Diag(d-1,-1,\dots,-1)$ so positive roots which are nonzero on $Z$ all have the same value.
    We compute the norm squared of the vector $U^\ast_Z$ in the Euclidean space of Hermitian matrices:
    $$ \Abs{U^\ast_Z}^2 = \Abs{[Z,U]}^2 = \Abs{\sum_{\beta(Z)\ne 0} \beta(Z) U_\beta}^2 = \alpha(Z)^2 \Abs{U}^2 ,$$
    where the third inequality holds because $\beta(Z)=\alpha(Z)$ when $\beta(Z)>0$.
    Note that $\alpha(Z)^2 = \frac{d}{2(d-1)}$, see (\ref{eqn: Z}).
    For $q \in V$ satisfying $d_\alpha(o,q) \ge S'$ we compute the norm squared of the vector $U^\ast_q$ in $T_q\X$ with respect to the Riemannian metric:
    \begin{align*}
        \abs{U^\ast_q}^2_q & = \abs{ \mathrm{ev}_o \circ \Ad(q^{-1/2})(U)}^2_o && \text{since } q^{1/2}_\ast \circ \eva_{q^{1/2}.o} =  \eva_o \circ \Ad(q^{-1/2}) \\ 
        & = \abs{\mathrm{ev}_o \left(\sum_{\beta(Z)>0} e^{-\beta(\vec{d}(o,q))} U_\beta + e^{\beta(\vec{d}(o,q))} U_{-\beta}\right)}^2_o && \text{since } \Ad(q^{-1/2})(U_\beta) = e^{-\beta(\vec{d}(o,q))} U_\beta \\ 
        & = \abs{\sum_{\beta(Z)>0} \left( e^{-\beta(\vec{d}(o,q))} -e^{\beta(\vec{d}(o,q))}\right) (U_\beta)^\ast_o}^2_o && \text{since } U_\beta + U_{-\beta} \in \mathfrak{k} = \ker \eva_o \\ 
        & = \sum_{\beta(Z)>0} \left( e^{\beta(\vec{d}(o,q))} -e^{-\beta(\vec{d}(o,q))}\right)^2 \abs{(U_\beta)^\ast_o}^2_o && \text{since the root spaces } \mathfrak{g}_\beta \text{ are orthogonal} \\
        & = \sum_{\beta(Z)>0} 4 \sinh^2(\beta(\vec{d}(o,q))) \abs{(U_\beta)^\ast_o}^2_o \\ 
        & \ge \sinh^2(S') \cdot 2 \sum_{\beta(Z)>0} 2 \abs{(U_\beta)^\ast_o}^2_o && \text{by assumption that } d_\alpha(o,q) \ge S' \\ 
        & = \sinh^2(S') \cdot 2 \sum_{\beta(Z)> 0} \Abs{U_\beta}^2 && \text{since } 2 \abs{(U_\beta)^\ast_o}^2 = \Abs{U_\beta}^2 \\ 
        & = \sinh^2(S') \Abs{U}^2 && \text{since } \Abs{U_\beta} = \Abs{U_{-\beta}} \text{ for } \beta(Z)\ne 0 . 
    \end{align*}
    To conclude we observe that the geodesic segment from $x$ to $y$ in $\X$ lies in 
    $\{q \in \X : d_\alpha(o,q) \ge S-D \}$.
\end{proof}

\section{Straight and spaced sequences}\label{sec: straight and spaced}

Kapovich-Leeb-Porti \cite{KLP17,KLP18a} and Bochi-Potrie-Sambarino \cite{BPS19} have proven that a finitely generated subgroup $\Gamma$ of $\SL(d,\K)$ is projective Anosov if and only if every geodesic in $\Gamma$ maps to a uniformly $d_\alpha$-undistorted sequence in $\X$, see Definition \ref{def:definitions for sequences}(1). 
In this section we state and prove a local criterion for a sequence in $\X$ to be globally and uniformly $d_\alpha$-undistorted. 

\medskip

Theorem \ref{thm:straight and spaced implies undistorted} is similar to
\cite[Theorem 7.2]{KLP14},
\cite[Theorem 3.18]{KapovichLeebPorti2023localtoglobal} and \cite[Theorem 5.1]{riestenberg2024quantified} but subtly different.
The statement here assumes that the sequence is $S$-spaced with respect to the $d_\alpha$-pseudometric, which is a bit weaker than assuming that consecutive pairs are simultaneously uniformly regular and $S$-spaced with respect to the Riemannian metric.
The conclusion is also weaker, since we only obtain that the sequence is $d_\alpha$-undistorted, and it may fail to fellow travel truncated Weyl cones.
However, once this statement, which is purely about sequences in the symmetric space, is applied to actions of finitely generated subgroups where the sequences come from geodesics in the Cayley graph, the Lipschitz property of the orbit map implies the sequences are Morse. 
It is straightforward to modify the proof of Theorem \ref{thm:straight and spaced implies undistorted} to obtain a statement which assumes this stronger condition and implies that the sequence is uniformly Morse. 

\medskip

We need the following definitions in order to state the main theorem.
Recall the pseudometric $d_\alpha$ from \S \ref{sec:regularity and pseudometric} and the $\zeta$-angle from Definition \ref{def:zeta angles}.

\begin{definition}\label{def:definitions for sequences}
    Let $(x_n)$ be a sequence of points in $\X$.

    \begin{enumerate}
        \item The sequence is \emph{$d_\alpha$-undistorted with constants $c_1>0$ and $c_2 \ge 0$} if for all $m,n$:
        $$ c_1\abs{m-n} -c_2 \le d_\alpha(x_n,x_m) .$$
        \item The sequence is \emph{$S$-spaced} for $s\ge 0$ if for all $n$:
        $$ d_\alpha(x_n,x_{n+1}) \ge S .$$
        \item The sequence is \emph{$\epsilon$-straight} if each segment $x_nx_{n+1}$ is $\zeta$-regular and for all $n$:
        $$ \measuredangle^{\iota \zeta,\zeta}_{x_n}(x_{n-1},x_{n+1})\ge \pi-\epsilon .$$
    \end{enumerate}
\end{definition}

We may now state and prove the main theorem: sufficiently straight and spaced sequences are $d_\alpha$-undistorted.
The inequalities appearing in the hypotheses are briefly explained in Remark \ref{rem:assumptions explained}.

\begin{theorem}[Sufficiently straight and spaced sequences are $d_\alpha$-undistorted]\label{thm:straight and spaced implies undistorted}
    
    Let $\epsilon_{aux}>\epsilon$, $S$ and $\delta_1,\delta_2,\delta_3,\delta_4$ satisfy:
    
    \begin{equation}\label{ass:antipodal}
    \min \left\{ \cos\left(\epsilon_{aux} + \sqrt{\frac{d}{2(d-1)}} \frac{\delta_4}{\sinh(S-\delta_4)} \right) , \cos\left(2\epsilon_{aux}-\epsilon \right)\right\} > - \frac1{d-1} ,
    \end{equation}
    \begin{equation}\label{ass:angle to distance}
        \max \left\{ (1-d) \cos(\epsilon_{aux})+d \sech^2(\delta_1) , (1-d) \cos(2\epsilon_{aux}-\epsilon)+d \sech^2(\delta_3) \right\} \le 1 ,
    \end{equation}
    \begin{equation}\label{ass:distance to angle}
        (1-d) \cos(\epsilon_{aux}-\epsilon)+d \sech^2(\delta_2) \le 1 ,
    \end{equation}
    \begin{equation}\label{ass:spaced-to-distance}
        (e^{\delta_1}-1)e^{-S} \le \delta_2, 
    \end{equation}
    \begin{equation}\label{ass:sufficient spacing}
        \delta_4 \ge \min \{2 \delta_3, \delta_3 + (e^{\delta_3}-1)e^{-S} \} \text{ and } S > 2 \delta_4 .
    \end{equation}

    Then an $S$-spaced and $\epsilon$-straight sequence is $d_\alpha$-undistorted with constants $(S-2\delta_4,2\delta_4)$.
\end{theorem}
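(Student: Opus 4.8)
The plan is to run the standard Kapovich–Leeb–Porti local-to-global argument, but adapted to the $d_\alpha$-pseudometric and using the angle-to-distance formula (Lemma~\ref{lem:angle-distance formula}) in place of the more delicate estimates of \cite{KLP23,Rie21}. Fix an $S$-spaced, $\epsilon$-straight sequence $(x_n)$. The key geometric object is, for each consecutive triple, the parallel set $P_n \coloneqq P(\hat\tau_n,\tau_n)$ where $\tau_n = \zeta(x_nx_{n+1})$ and $\hat\tau_n = \iota\zeta(x_{n+1}x_n)$; transversality of these simplices (so that $P_n$ is defined) comes from assumption~\eqref{ass:antipodal} via Lemma~\ref{lem:angle to transversality}. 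I would first show that $x_{n-1}$ lies close to $P_n$: the $\epsilon$-straightness bounds the angle $\measuredangle_{x_n}(\hat\tau_n, \zeta(x_nx_{n-1}))$ (the reverse direction $x_nx_{n-1}$ points nearly oppositely to $x_nx_{n+1}$), and assumption~\eqref{ass:angle to distance} then forces $d(x_n, P_n) \le \delta_1$ by Lemma~\ref{lem:angle-distance formula} — actually the point we want close to $P_n$ is $x_{n-1}$, obtained by the same reasoning applied along the segment, and then its distance propagates. Next, Lemma~\ref{lem:spacing to distance} with assumption~\eqref{ass:spaced-to-distance} upgrades this: since $d_\alpha(x_{n-1},x_n)\ge S$ and $x_{n-1}$ is within $\delta_1$ of a parallel set of the appropriate type, the later point $x_n$ (and inductively points further down the sequence) is within $\delta_2$ of the relevant parallel set. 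Running the angle-to-distance formula backwards (assumption~\eqref{ass:distance to angle}) converts this small distance into a small $\zeta$-angle bound at each vertex.

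The heart of the argument is then a \emph{monotonicity along Weyl cones} statement: I want to show that, writing $\pi_n$ for the projection of the sequence onto the Weyl cone $V(x_n,\st(\tau_n))$ and using Lemma~\ref{lemma: dalpha and busemanns} to express $d_\alpha$ via Busemann functions in $\mathcal F\cap\st(\tau_n)$, the $d_\alpha$-distance accumulated along each step is not cancelled by backtracking. Concretely, I would argue that for $m>n$ the point $x_m$ lies in the Weyl cone $V(x_n,\st(\tau_n))$ up to a controlled error, so that there is a Busemann function $b_n\in\mathcal F\cap\st(\tau_n)$ with $b_n(x_n)-b_n(x_m)$ essentially equal to $d_\alpha(x_n,x_m)$, while $b_n(x_n)-b_n(x_{n+1})\ge d_\alpha(x_n,x_{n+1})\ge S$ by $S$-spacing. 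Telescoping $b_n(x_n)-b_n(x_m) = \sum_{k=n}^{m-1}\big(b_n(x_k)-b_n(x_{k+1})\big)$ and bounding each summand below by $S$ minus a correction term, where the correction is controlled by the $\zeta$-angle estimates just established and by Lemma~\ref{lem:lipschitz constant of zeta} (with the auxiliary parameter $\delta_4$ measuring how far into each Weyl cone one must travel before the type has stabilized), gives $d_\alpha(x_n,x_m)\ge (S-2\delta_4)\,|m-n| - 2\delta_4$. Assumption~\eqref{ass:sufficient spacing}, in particular $S>2\delta_4$, is exactly what makes the per-step lower bound $S-2\delta_4$ positive, and the $2\delta_4$ additive loss absorbs the boundary effects at the two ends of the subsequence.

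The main obstacle, and where the auxiliary parameters $\epsilon_{aux}$, $\alpha_{new}<\alpha_0$, and $\delta_3,\delta_4$ earn their keep, is controlling the \emph{drift of the type} $\tau_n$ as $n$ varies: the naive telescoping only works if all the Busemann functions $b_n$ can be taken compatibly, i.e.\ if the stars $\st(\tau_n)$ are nested in the right way along the sequence. They are only approximately nested, and the error is precisely a $\zeta$-angle between $\tau_n$ and $\tau_{n+1}$ as seen from a far-away basepoint — this is what Lemma~\ref{lem:lipschitz constant of zeta} is designed to estimate, with the hypothesis $d_\alpha(o,x)\ge S$ there matching the $S$-spacing here and the $\sinh(S-D)$ denominator explaining the shape of the condition in assumption~\eqref{ass:antipodal}. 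So the real work is a bootstrapping/induction on the length $|m-n|$: assuming the desired lower bound and the angle control for all shorter subsegments, one shows $x_m$ stays inside a slightly enlarged Weyl cone $V(x_n,\st(\tau_n))$, re-derives the distance-to-parallel-set bounds, and closes the loop. I would organize this as an induction with two simultaneously-maintained inductive hypotheses — a $d_\alpha$-growth bound and a $\zeta$-angle bound — which is the usual shape of such local-to-global proofs; the novelty over \cite{Rie21} is simply that every appearance of a genuine Riemannian distance estimate has been replaced by the cleaner trigonometric identity of Lemma~\ref{lem:angle-distance formula}, which is what collapses the word-length bound from $2$ million to $8$.
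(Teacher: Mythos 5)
Your proposal correctly assembles the ingredients and even names the main obstacle --- the drift of the simplices $\tau_n$ from step to step, which prevents a naive telescoping of Busemann functions --- but it does not resolve that obstacle, and resolving it is the heart of the proof. You propose to work with a different parallel set $P_n=P(\hat\tau_n,\tau_n)$ for each consecutive pair and to close the argument by a ``bootstrapping induction with two simultaneously-maintained hypotheses'' showing that $x_m$ stays in a slightly enlarged Weyl cone $V(x_n,\st(\tau_n))$. That induction is left entirely unexecuted, and it is not clear it closes: with a varying family $b_n\in\mathcal{F}\cap\st(\tau_n)$, each summand $b_n(x_k)-b_n(x_{k+1})$ carries a correction depending on the relative position of $\tau_n$ and $\tau_k$, and these corrections a priori accumulate over $|m-n|$ steps, degrading the multiplicative constant rather than contributing only the additive loss $2\delta_4$ you assert. (Also, the parameters $\alpha_{new}<\alpha_0$ play no role in the proof --- they are vestigial in the statement --- so attributing the drift control to them is a red flag.)

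The paper avoids the drift problem entirely by a propagation--extraction mechanism that is absent from your proposal. Step 1 shows that the condition $\measuredangle^{\iota\zeta,\zeta}_{x_n}(\hat\tau,x_{n+1})\ge\pi-\epsilon_{aux}$ (``moving $\epsilon_{aux}$-away from a fixed $\hat\tau$'') propagates from $n$ to $n+1$, via the cycle: large angle $\Rightarrow$ small distance to the parallel set (Assumption \ref{ass:angle to distance}), $\Rightarrow$ exponential contraction along the ray (Assumption \ref{ass:spaced-to-distance}), $\Rightarrow$ small backward angle at $x_{n+1}$ (Assumption \ref{ass:distance to angle}), $\Rightarrow$ large forward angle by straightness. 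Step 2 then extracts a single pair $\tau_\pm\in\bigcap_n C^\pm_n$; these are transverse by Assumption \ref{ass:antipodal}, and every $x_n$ lies within $\delta_3$ of the single parallel set $P(\tau_-,\tau_+)$. Projecting the whole sequence onto this one parallel set, the projections lie in genuinely nested Weyl cones (this is where Lemma \ref{lem:lipschitz constant of zeta} and $\delta_4$ actually enter --- to identify the ideal type of each projected segment as $\tau_+$ --- not where you place them), so Lemma \ref{lemma: dalpha and busemanns} supplies, for each fixed pair $m>n$, a single Busemann function for which the telescoping is exact, with no per-step correction. Without the extraction of $\tau_\pm$, or a worked-out substitute for it, your argument has a genuine gap.
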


\begin{remark}\label{rem:assumptions explained}
    Assumption \ref{ass:antipodal} guarantees that certain pairs of simplices arising in the proof are antipodal by Lemma \ref{lem:angle to transversality}.
    Assumption \ref{ass:angle to distance} (resp.\ Assumption \ref{ass:distance to angle}) allows us to apply Lemma \ref{lem:angle-distance formula} to see that  certain points are close to parallel sets (resp.\ certain $\zeta$-angles are small enough).
    We use 
    Assumption \ref{ass:spaced-to-distance} to apply Lemma \ref{lem:spacing to distance} and find that certain points are close to parallel sets.
    Finally we use 
    Assumption \ref{ass:sufficient spacing} to obtain uniformly monotonic projections to Weyl cones.
\end{remark}

\begin{remark}
    The reader may want to convince themself that for $\epsilon$ sufficiently small and $S$ sufficiently large, there exist auxiliary parameters satisfying the hypotheses of Theorem \ref{thm:straight and spaced implies undistorted}.
    To see this, choose any $\epsilon$ smaller than $\epsilon_{max} = \cos^{-1} \left( \frac{-1}{d-1} \right)$ and any $\epsilon_{aux}$ satisfying $\epsilon < \epsilon_{aux} < \frac{\epsilon+\epsilon_{max}}{2}$.
    Then for any $\delta_1,\delta_2,\delta_3$ satisfying Assumptions \ref{ass:angle to distance} and \ref{ass:distance to angle} and any $\delta_4 \ge 2 \delta_3$, one observes that for $S$ sufficiently large, Assumptions \ref{ass:antipodal}, \ref{ass:spaced-to-distance} and \ref{ass:sufficient spacing} are satisfied.
\end{remark}

\begin{proof}
    Step 1: Propagation. 
    We first need to show that under these assumptions, the property of ``moving $\epsilon_{aux}$-away from/towards a hyperplane/line" propagates along the sequence.
    Assume that $\hat{\tau} \in \KPdual$ satisfies 
    $$ \measuredangle^{\iota \zeta,\zeta}_{x_0}(\hat{\tau},x_1) \ge \pi- \epsilon_{aux} .$$
    Write $\tau_{01} \in \KPd$ for $\zeta(x_0x_1(+\infty))$.
    By Assumption \ref{ass:antipodal} and Lemma \ref{lem:angle to transversality}, we have that $\hat{\tau}$ is transverse to $\tau_{01}$.
    By Assumption \ref{ass:angle to distance} and Lemma \ref{lem:angle-distance formula}, we have
    $$ d(x_0,P(\hat{\tau},\tau_{01})) \le \sech^{-1}\left(\sqrt{\frac1{d}\left(1-(1-d)\cos (\epsilon_{aux})\right)}\right) \le \delta_1. $$
    By Assumption \ref{ass:spaced-to-distance} and Lemma \ref{lem:spacing to distance}, we have 
    $$ d(x_1,P(\hat{\tau},\tau_{01})) \le  (e^{\delta_1}-1)e^{-S} \le \delta_2 $$
    small enough that Assumption \ref{ass:distance to angle} and Lemma \ref{lem:angle-distance formula} imply 
    $$ \measuredangle^{\iota \zeta,\iota \zeta}_{x_1}(\hat{\tau},x_0) \le \pi-\cos^{-1}\left( \frac1{d-1} \left( 1-d \sech^2 \left(\delta_2 \right) \right) \right) \le \epsilon_{aux}-\epsilon. $$
    By straightness and the triangle inequality, we have 
    $$ \measuredangle^{\iota\zeta,\zeta}_{x_1}(\hat{\tau},x_2) \ge \pi - \epsilon_{aux} .$$
    By induction, we have that $\measuredangle^{\iota\zeta,\zeta}_{x_n}(\hat{\tau},x_{n+1})\ge \pi- \epsilon_{aux}$ for all $n \ge 0$.
    A similar proof shows that the property of moving $\epsilon_{aux}$-towards a line propagates along the sequence.

    \medskip

    Step 2: Extraction. 
    The previous step allows us to find simplices $\tau_\pm$ that the sequence moves $\epsilon_{aux}$-towards/away from.
    Indeed, let 
    $$ C^+_n = \{ \tau_+ : \measuredangle^{\iota\zeta,\zeta}_{x_n}(x_{n-1},\tau_+) \ge \pi-\epsilon_{aux} \}  \text{ and } C^-_n = \{ \tau_- : \measuredangle^{\iota\zeta,\zeta}_{x_n}(\tau_-,x_{n+1}) \ge \pi-\epsilon_{aux} \}.$$
    The previous step implies that $\bigcap_{n} C^\pm_n$ is nonempty, so we may extract $\tau_\pm \in \bigcap_n C^\pm_n$.
    The proof of the previous step shows that since $\tau_- \in C^-_{n-1}$ we have  
    $$ \measuredangle^{\iota \zeta,\zeta}_{x_{n-1}}(\tau_-,x_n) \ge \pi- \epsilon_{aux} \implies \measuredangle^{\iota \zeta,\iota \zeta}_{x_n}(\tau_-,x_{n-1}) \le \epsilon_{aux}-\epsilon $$ 
    and a similar statement holds for $\tau_+$, so the triangle inequality gives
    $$ \abs{\measuredangle^{\iota \zeta,\zeta}_{x_n}(\tau_-,\tau_+) - \measuredangle^{\iota \zeta,\zeta}_{x_n}(x_{n-1},x_{n+1})} \le \measuredangle^{\iota \zeta,\iota \zeta}_{x_n}(\tau_-,x_{n-1}) + \measuredangle^{\zeta,\zeta}_{x_n}(\tau_+,x_{n+1}) \le 2 (\epsilon_{aux}-\epsilon) .$$
    By straightness, the previous inequality implies that $\measuredangle^{\iota \zeta,\zeta}_{x_n}(\tau_-,\tau_+) \ge \pi - 2\epsilon_{aux} + \epsilon$, so $\tau_-$ is transverse to $\tau_+$ by Assumption \ref{ass:antipodal} and Lemma \ref{lem:angle to transversality}.
    Moreover, 
    $$ d(x_n,P(\tau_-,\tau_+)) \le \sech^{-1}\left(\sqrt{\frac1{d}\left(1-(1-d)\cos (2 \epsilon_{aux}-\epsilon)\right)}\right) \le \delta_3 $$
    by Assumption \ref{ass:angle to distance} and Lemma \ref{lem:angle-distance formula}.

    \medskip
    
    Step 3: Undistortion. 
    We now verify that the sequence $(x_n)$ is $d_\alpha$-undistorted. 
    For all $n$, let $\overline{x}_n$ denote the nearest point to $x_n$ in the parallel set $P(\tau_-,\tau_+)$.
    We will show that the sequence $(\overline{x}_n)$ is $d_\alpha$-undistorted. 
    
    Let $\xi$ be the ideal point corresponding to the ray $\overline{x}_n\overline{x}_{n+1}$. 
    Since the rays $x_n\xi$ and $\overline{x}_n\xi$ are asymptotic, their Hausdorff distance is at most $d(x_n,\overline{x}_n)\le \delta_3$,
    so, by the proof of Lemma \ref{lem:spacing to distance},  $x_{n+1}$ is at most $\delta_4$ from $x_n\xi$. 
    We then have
    \begin{equation*}\label{eqn:nested}
        \measuredangle^{\iota \zeta,\zeta}(\tau_-,\xi) \ge \measuredangle_{x_n}^{\iota \zeta,\zeta}(\tau_-,\xi) \ge \measuredangle_{x_n}^{\iota \zeta,\zeta}(\tau_-,x_{n+1}) - \measuredangle_{x_n}^{\zeta,\zeta}(x_{n+1},\xi)  \ge \pi - \epsilon_{aux} - \measuredangle_{x_n}^{\zeta,\zeta}(x_{n+1},\xi)
    \end{equation*}
    and we can bound 
    $$ \measuredangle^{\zeta,\zeta}_{x_n}(x_{n+1},\xi) \le \sqrt{\frac{d}{2(d-1)}} \frac{\delta_4}{\sinh(S-\delta_4)}$$ 
    by Lemma \ref{lem:lipschitz constant of zeta}. 

    By Assumption \ref{ass:antipodal}, it follows that $\tau_-$ is antipodal to $\zeta(\xi)$; since $\tau_+$ is the only simplex in $\partial_{\rm vis}P(\tau_-,\tau_+)$ antipodal to $\tau_-$, this implies $\tau(\xi)=\tau_+$.
    By the convexity of Weyl cones the sequence of projections $\overline{x}_n$ land in nested Weyl cones in $P(\tau_-,\tau_+)$ \cite[Corollary 2.11]{KLP17}.

    Finally we show that $(\overline{x}_n)$ is $d_\alpha$-undistorted.
    Note that the vector-valued triangle inequality (see \cite{KLP17,Parreau} or \cite[Corollary 3.11]{riestenberg2024quantified}) implies that $d_\alpha(\overline{x}_{n},\overline{x}_{n+1}) \ge d_\alpha(x_{n},x_{n+1}) - d(x_n,\overline{x}_n) - d(x_{n+1},\overline{x}_{n+1}) \ge S-2\delta_4$, which is positive by Assumption \ref{ass:sufficient spacing}.
    Fix $m>n$. 
    By Lemma \ref{lemma: dalpha and busemanns} and the nestedness of Weyl cones, 
    there exists a Busemann function $b$ so that 
    \begin{align*}
        d_\alpha(\overline{x}_n,\overline{x}_m) & = b(\overline{x}_n)-b(\overline{x}_m) = b(\overline{x}_n)-b(\overline{x}_{n+1})+b(\overline{x}_{n+1}) - b(\overline{x}_{n+2}) +\cdots + b(\overline{x}_{m-1}) - b(\overline{x}_{m}) \\
        & \ge d_\alpha(\overline{x}_{n},\overline{x}_{n+1}) + d_\alpha(\overline{x}_{n+1},\overline{x}_{n+2}) +\cdots + d_\alpha(\overline{x}_{m-1},\overline{x}_{m}) \ge (m-n)(S-2\delta_4).
    \end{align*}
    This implies that $(x_n)$ is $d_\alpha$-undistorted with constants $(S-2\delta_4,2\delta_4)$.
\end{proof}

\section{The algorithm by example}

We first briefly describe the algorithm to certify a subgroup is Anosov.
If a finitely generated subgroup $\Gamma$ of $\SL(d,\mathbb{K})$ is Anosov, then eventually the orbit map takes geodesics in $\Gamma$ to straight-and-spaced sequences in $\X$, up to coarsifying and passing to midpoints, see \cite[Proposition 7.16]{KLP14} or \cite[Theorem 5.5]{riestenberg2024quantified}. 
The algorithm to certify the Anosov property is then to check the straight-and-spaced condition on a finite ball in the Cayley graph of $\Gamma$. 
If the check passes, then the subgroup is Anosov; otherwise, we examine words of larger length and check the straight-and-spaced condition again.
The algorithm terminates if and only if $\Gamma$ is Anosov.
We remark that this process is slightly different than the process described in \cite[Section 7.7]{KLP14}, in that we check the straight-and-spaced condition instead of their local Morse condition. 

We illustrate the algorithm by an example.
We consider a fixed surface subgroup of $\SL(3,\R)$ and verify the Anosov property via a computation involving only the words of length $8$. 
Using KBMAG \cite{kbmag1.5.11}, the words of length $8$ can be enumerated via a finite state automoton. 
Then via \cite{Rie24code} or \cite{Weis24code}, the straightness and spacing parameters of the associated midpoint sequence can be computed.
It is then easy to obtain auxiliary parameters satisfying Theorem \ref{thm:straight and spaced implies undistorted}, which certifies the Anosov condition.

\medskip

Let $\Gamma$ be the subgroup of $\SL(3,\R)$ generated by 
$$ S = \left\{ 
\begin{bmatrix}
\cos(\theta) & \sin(\theta) & 0 \\
-\sin(\theta) & \cos(\theta) & 0 \\
0 & 0 & 1 
\end{bmatrix}
\begin{bmatrix}
\cosh(T) & 0 & \sinh(T) \\
0 & 1 & 0 \\
\sinh(T) & 0 & \cosh(T) 
\end{bmatrix}
\begin{bmatrix}
\cos(\theta) & -\sin(\theta) & 0 \\
\sin(\theta) & \cos(\theta) & 0 \\
0 & 0 & 1 
\end{bmatrix}
: \theta \in \left\{0,\frac{\pi}{8},\frac{\pi}{4},\frac{3\pi}{8}\right\} \right\} $$
where $T=2\cosh^{-1}(\cot(\pi/8))$.
In fact, $\Gamma$ preserves a symmetric bilinear form of signature $(2,1)$ and hence acts naturally on the hyperbolic plane. 
The corresponding quotient is the Bolza surface.

Labelling the generators by $a,b,c,d$ and their inverses by $A,B,C,D$, a presentation of $\Gamma$ is given by the single relation $adCbADcB$.
The words of length at most $8$ in $\Gamma$ can be enumerated using the KBMAG package in GAP \cite{kbmag1.5.11}.
For each such word $w$, we decompose it as $w=w_1w_2$ into words of length $4$. 

We set 
$$ m_1= \midp(o,w_1^{-1}o), \text{ and } m_2 = \midp(o,w_2o) $$
and compute 
$$ s_w = d_\alpha(m_1,m_2), \quad \epsilon_w^+ = \measuredangle_{m_1}^{\zeta,\zeta}(o,m_2), \text{ and } \epsilon_w^- = \measuredangle_{m_2}^{\iota \zeta,\iota \zeta}(o,m_1).$$

We now consider a geodesic $(\gamma_n)$ in $\Gamma$ with $\gamma_0=id$. 
We will see that $\gamma_n o$ is $d_\alpha$-undistorted sequence in $\X$, equivalently, that $\gamma_{4n} o$ is $d_\alpha$-undistorted, equivalently, that the sequence of midpoints $m_n=\midp(\gamma_{4n} o,\gamma_{4n+4} o)$ is $d_\alpha$-undistorted.
In general, if a finitely generated subgroup $\Gamma$ is Anosov then for some $k>0$ the sequence of midpoints $m_n=\midp(\gamma_{kn} o,\gamma_{kn+k} o)$ is straight-and-spaced, which can be verified by examining words of length $3k$, see \cite[Proposition 7.16]{KLP14} or \cite[Theorem 5.5]{riestenberg2024quantified}. 
We use the same trick as \cite{Rie23verifying} to instead consider words of length $2k$, where $k=4$ in this example. 
The idea is that one can estimate the straightness parameter for words of length $3k$ with words of length $2k$. 

Since $\Gamma$ acts by isometries, the sequence of midpoints is $S$-spaced for 
$$ S = \min \{s_w :\abs{w}=8 \} ,$$
and $\epsilon$-straight for 
$$ \epsilon = \max\{\epsilon_w^+ : \abs{w}=8 \} + \max\{\epsilon_w^- : \abs{w}=8 \} .$$ 

We compute these in the example in the code available at \cite{Rie24code}, obtaining 
$$ \min\{\cos(\epsilon_w^+) : \abs{w}=8 \}, \min\{\cos(\epsilon_w^-) : \abs{w}=8 \} \approx 0.87 \implies \epsilon \approx 2\cos^{-1}(0.87) \approx 1.03, $$
$$ S \approx 3.08 .$$ 

We choose auxiliary constants $\epsilon_{aux} = 0.7 \epsilon + 0.3 \epsilon_{max}$, where $\epsilon_{max} = \cos^{-1} \left( \frac{-1}{3-1} \right)$, and
$$ \delta_1 = \sech^{-1} \left(\sqrt{\frac1{3}(1-(1-3)\cos(\epsilon_{aux}))}\right) \approx 0.92 $$
$$ \delta_2 = \sech^{-1} \left(\sqrt{\frac1{3}(1-(1-3)\cos(\epsilon_{aux}-\epsilon))}\right) \approx 0.18 $$
$$ \delta_3 = \sech^{-1} \left(\sqrt{\frac1{3}(1-(1-3)\cos(2\epsilon_{aux}-\epsilon))}\right) \approx 1.29 $$
$$ \delta_4 = \delta_3 + (e^{\delta_3}-1)e^{-S} \approx 1.41 .$$
These constants satisfy the conditions of Theorem \ref{thm:straight and spaced implies undistorted}.
So each sequence of midpoints $(m_n)$ constructed above is $d_\alpha$-undistorted.
This implies that every geodesic in $\Gamma$ maps to a $d_\alpha$-undistorted sequence in $\X$. 
By \cite{KLP18b} or \cite{BPS19}, it follows that $\Gamma$ is Anosov.

\bibliographystyle{amsalpha}
\bibliography{biblio0,biblio}

@article {KapovichLeebPorti2023localtoglobal,
    AUTHOR = {Kapovich, Michael and Leeb, Bernhard and Porti, Joan},
     TITLE = {Morse actions of discrete groups on symmetric spaces:
              local-to-global principle},
   JOURNAL = {Geom. Topol.},
  FJOURNAL = {Geometry \& Topology},
    VOLUME = {29},
      YEAR = {2025},
    NUMBER = {5},
     PAGES = {2343--2390},
      ISSN = {1465-3060,1364-0380},
   MRCLASS = {22E40 (20F65 53C35)},
  MRNUMBER = {4950918},
       DOI = {10.2140/gt.2025.29.2343},
       URL = {https://doi.org/10.2140/gt.2025.29.2343},
}

@misc{kbmag1.5.11,   
author =           {Holt, D. and GAP Team, T.},   
title =            {{kbmag}, {K}nuth-{B}endix on {M}onoids and {A}utomatic {G}roups,                       {V}ersion 1.5.11},   
month =            {Jan},   
year =             {2023},   
note =             {Refereed GAP package},   
howpublished =     {\href           {https://gap-packages.github.io/kbmag}                       {\texttt{https://gap-packages.github.io/}\discretionary                       {}{}{}\texttt{kbmag}}},   
keywords =         {Knuth-Bendix; Automatic Groups},   
printedkey =       {HT23} }

@misc{davaloriestenberg2024dirichletanosov,
      title={Finite-sided {D}irichlet domains and {A}nosov subgroups}, 
      author={Colin Davalo and J. Maxwell Riestenberg},
      year={2024},
      eprint={2402.06408},
      archivePrefix={arXiv},
      primaryClass={math.GT},
      url={https://arxiv.org/abs/2402.06408}, 
}

@article {Lab06,
    AUTHOR = {Labourie, Fran\c{c}ois},
     TITLE = {Anosov flows, surface groups and curves in projective space},
   JOURNAL = {Invent. Math.},
  FJOURNAL = {Inventiones Mathematicae},
    VOLUME = {165},
      YEAR = {2006},
    NUMBER = {1},
     PAGES = {51--114},
      ISSN = {0020-9910},
   MRCLASS = {20F65 (37D20 37F30)},
  MRNUMBER = {2221137},
MRREVIEWER = {Richard Kenyon},
       DOI = {10.1007/s00222-005-0487-3},
       URL = {https://doi.org/10.1007/s00222-005-0487-3},
}

@article {GW12,
    AUTHOR = {Guichard, Olivier and Wienhard, Anna},
     TITLE = {Anosov representations: domains of discontinuity and
              applications},
   JOURNAL = {Invent. Math.},
  FJOURNAL = {Inventiones Mathematicae},
    VOLUME = {190},
      YEAR = {2012},
    NUMBER = {2},
     PAGES = {357--438},
      ISSN = {0020-9910},
   MRCLASS = {22F30 (32G15 53C30 53D25)},
  MRNUMBER = {2981818},
MRREVIEWER = {Pablo Su\'{a}rez-Serrato},
       DOI = {10.1007/s00222-012-0382-7},
       URL = {https://doi.org/10.1007/s00222-012-0382-7},
}

@article {KLP18a,
    AUTHOR = {Kapovich, Michael and Leeb, Bernhard and Porti, Joan},
     TITLE = {Dynamics on flag manifolds: domains of proper discontinuity
              and cocompactness},
   JOURNAL = {Geom. Topol.},
  FJOURNAL = {Geometry \& Topology},
    VOLUME = {22},
      YEAR = {2018},
    NUMBER = {1},
     PAGES = {157--234},
      ISSN = {1465-3060},
   MRCLASS = {53C35 (22E40 37B05 51E24)},
  MRNUMBER = {3720343},
MRREVIEWER = {Gabriele Link},
       DOI = {10.2140/gt.2018.22.157},
       URL = {https://doi.org/10.2140/gt.2018.22.157},
}

@misc{KLP14,
      title={Morse actions of discrete groups on symmetric space}, 
      author={Michael Kapovich and Bernhard Leeb and Joan Porti},
      year={2014},
      eprint={1403.7671},
      archivePrefix={arXiv},
      primaryClass={math.GR},
      url={https://arxiv.org/abs/1403.7671}, 
}

@book{eberlein96,
  title={Geometry of nonpositively curved manifolds},
  author={Eberlein, Patrick},
  year={1996},
  publisher={University of Chicago Press}
}

@book{helgason79,
  title={Differential geometry, {L}ie groups, and symmetric spaces},
  author={Helgason, Sigurdur},
  volume={80},
  year={1979},
  publisher={Academic press}
}

@article{KLP18b,
  title={A {M}orse Lemma for quasigeodesics in symmetric spaces and euclidean buildings},
  author={Kapovich, Michael and Leeb, Bernhard and Porti, Joan},
  journal={Geometry \& Topology},
  volume={22},
  number={7},
  pages={3827--3923},
  year={2018},
  publisher={Mathematical Sciences Publishers}
}

@article{KLP17,
  title={Anosov subgroups: dynamical and geometric characterizations},
  author={Kapovich, Michael and Leeb, Bernhard and Porti, Joan},
  journal={European Journal of Mathematics},
  volume={3},
  number={4},
  pages={808--898},
  year={2017},
  publisher={Springer}
}

@book {BridsonHaefliger1999metricspaces,
	AUTHOR = {Bridson, Martin R. and Haefliger, Andr\'{e}},
	TITLE = {Metric spaces of non-positive curvature},
	SERIES = {Grundlehren der Mathematischen Wissenschaften [Fundamental
	Principles of Mathematical Sciences]},
	VOLUME = {319},
	PUBLISHER = {Springer-Verlag, Berlin},
	YEAR = {1999},
	PAGES = {xxii+643},
	ISBN = {3-540-64324-9},
	MRCLASS = {53C23 (20F65 53C70 57M07)},
	MRNUMBER = {1744486},
	MRREVIEWER = {Athanase Papadopoulos},
	DOI = {10.1007/978-3-662-12494-9},
	URL = {https://doi.org/10.1007/978-3-662-12494-9},
}

@unpublished{Parreau,
author = {Parreau, Anne},
title = {La distance vectorielle dans les immeubles affines et les espaces symétriques},
note= {In preparation},
}

@article {BPS19,
    AUTHOR = {Bochi, Jairo and Potrie, Rafael and Sambarino, Andr\'{e}s},
     TITLE = {Anosov representations and dominated splittings},
   JOURNAL = {J. Eur. Math. Soc. (JEMS)},
  FJOURNAL = {Journal of the European Mathematical Society (JEMS)},
    VOLUME = {21},
      YEAR = {2019},
    NUMBER = {11},
     PAGES = {3343--3414},
      ISSN = {1435-9855},
   MRCLASS = {22E40 (20F67 37B99 37D30 53C35)},
  MRNUMBER = {4012341},
       DOI = {10.4171/JEMS/905},
       URL = {https://doi-org.ezproxy.lib.utexas.edu/10.4171/JEMS/905},
}

@article{kapovich2016discreteness,
  title={Discreteness is undecidable},
  author={Kapovich, Michael},
  journal={International Journal of Algebra and Computation},
  volume={26},
  number={03},
  pages={467--472},
  year={2016}
}

@inproceedings {Kas18,
    AUTHOR = {Kassel, Fanny},
     TITLE = {Geometric structures and representations of discrete groups},
 BOOKTITLE = {Proceedings of the {I}nternational {C}ongress of
              {M}athematicians---{R}io de {J}aneiro 2018. {V}ol. {II}.
              {I}nvited lectures},
     PAGES = {1115--1151},
 PUBLISHER = {World Sci. Publ., Hackensack, NJ},
      YEAR = {2018},
   MRCLASS = {57N16 (20H10 22E40 57M50 57S30)},
  MRNUMBER = {3966802},
}

@incollection {GW18,
    AUTHOR = {Guichard, Olivier and Wienhard, Anna},
     TITLE = {Positivity and higher {T}eichm\"{u}ller theory},
 BOOKTITLE = {European {C}ongress of {M}athematics},
     PAGES = {289--310},
 PUBLISHER = {Eur. Math. Soc., Z\"{u}rich},
      YEAR = {2018},
   MRCLASS = {22E15 (22E46)},
  MRNUMBER = {3887772},
MRREVIEWER = {Renato G. Bettiol},
}

@article{Hit92,
	title = {Lie groups and {T}eichmüller space},
	journal = {Topology},
	volume = {31},
	number = {3},
	pages = {449-473},
	year = {1992},
	issn = {0040-9383},
	doi = {https://doi.org/10.1016/0040-9383(92)90044-I},
	url = {https://www.sciencedirect.com/science/article/pii/004093839290044I},
	author = {N.J. Hitchin}
}

@incollection {Kap23,
	AUTHOR = {Kapovich, Michael},
	TITLE = {Geometric algorithms for discreteness and faithfulness},
	BOOKTITLE = {Computational aspects of discrete subgroups of {L}ie groups},
	SERIES = {Contemp. Math.},
	VOLUME = {783},
	PAGES = {87--112},
	PUBLISHER = {Amer. Math. Soc., [Providence], RI},
	YEAR = {[2023] \copyright 2023},
	ISBN = {978-1-4704-6804-0},
	MRCLASS = {20-08 (20F67 22E40 53C35)},
	MRNUMBER = {4556436},
	DOI = {10.1090/conm/783/15736},
	URL = {https://doi.org/10.1090/conm/783/15736},
}

@article{fockModuliSpacesLocal2006a,
  title = {Moduli Spaces of Local Systems and Higher {{Teichm{\"u}ller}} Theory},
  author = {Fock, Vladimir and Goncharov, Alexander},
  year = {2006},
  journal = {Publications Mathematiques de l'Institut des Hautes Etudes Scientifiques},
  volume = {103},
  number = {1},
  pages = {1--211},
  doi = {10.1007/s10240-006-0039-4},
  abstract = {Let G be a split semisimple algebraic group over Q with trivial center. Let S be a compact oriented surface, with or without boundary. We define positive representations of the fundamental group of S to G(R), construct explicitly all positive representations, and prove that they are faithful, discrete, and positive hyperbolic; the moduli space of positive representations is a topologically trivial open domain in the space of all representations. When S have holes, we defined two moduli spaces closely related to the moduli spaces of G-local systems on S. We show that they carry a lot of interesting structures. In particular we define a distinguished collection of coordinate systems, equivariant under the action of the mapping class group of S. We prove that their transition functions are subtraction free. Thus we have positive structures on these moduli spaces. Therefore we can take their points with values in any positive semifield. Their positive real points provide the two higher Teichm{\"u}ller spaces related to G and S, while the points with values in the tropical semifields provide the lamination spaces. We define the motivic avatar of the Weil-Petersson form for one of these spaces. It is related to the motivic dilogarithm.}
}

@incollection {Rie23verifying,
    AUTHOR = {Riestenberg, J. Maxwell},
     TITLE = {Verifying the straight-and-spaced condition},
 BOOKTITLE = {Computational {A}spects of {D}iscrete {S}ubgroups of {L}ie
              {G}roups},
    SERIES = {Contemp. Math.},
    VOLUME = {783},
     PAGES = {135--140},
 PUBLISHER = {Amer. Math. Soc., [Providence], RI},
      YEAR = {[2023] \copyright2023},
      ISBN = {978-1-4704-6804-0},
   MRCLASS = {53C35 (20F65 22E40)},
  MRNUMBER = {4556439},
MRREVIEWER = {Salah\ Mehdi},
       DOI = {10.1090/conm/783/15700},
       URL = {https://doi.org/10.1090/conm/783/15700},
}

@article {GM1991algorithm,
    AUTHOR = {Gilman, J. and Maskit, B.},
     TITLE = {An algorithm for {$2$}-generator {F}uchsian groups},
   JOURNAL = {Michigan Math. J.},
  FJOURNAL = {Michigan Mathematical Journal},
    VOLUME = {38},
      YEAR = {1991},
    NUMBER = {1},
     PAGES = {13--32},
      ISSN = {0026-2285,1945-2365},
   MRCLASS = {30F35 (20H10)},
  MRNUMBER = {1091506},
MRREVIEWER = {Peter\ J.\ Nicholls},
       DOI = {10.1307/mmj/1029004258},
       URL = {https://doi.org/10.1307/mmj/1029004258},
}

@article {Sam24ergodic,
    AUTHOR = {Sambarino, Andr\'es},
     TITLE = {A report on an ergodic dichotomy},
   JOURNAL = {Ergodic Theory Dynam. Systems},
  FJOURNAL = {Ergodic Theory and Dynamical Systems},
    VOLUME = {44},
      YEAR = {2024},
    NUMBER = {1},
     PAGES = {236--289},
      ISSN = {0143-3857,1469-4417},
   MRCLASS = {22E40 (37Axx 37Dxx)},
  MRNUMBER = {4676211},
       DOI = {10.1017/etds.2023.13},
       URL = {https://doi.org/10.1017/etds.2023.13},
}

@article{canary2024measurestransverse,
title = {Patterson–{S}ullivan measures for transverse subgroups},
journal = {Journal of Modern Dynamics},
volume = {20},
number = {0},
pages = {319-377},
year = {2024},
issn = {1930-5311},
doi = {10.3934/jmd.2024009},
url = {https://www.aimsciences.org/article/id/66696d275a42b314c5bf08e2},
author = {Richard Canary and Tengren Zhang and Andrew Zimmer},
keywords = {Discrete subgroups of Lie groups, Patterson–Sullivan theory, ergodic dichotomy}
}

@article{ELO23anosovmixing,
    AUTHOR = {Edwards, Samuel and Lee, Minju and Oh, Hee},
     TITLE = {Anosov groups: local mixing, counting and equidistribution},
   JOURNAL = {Geom. Topol.},
  FJOURNAL = {Geometry \& Topology},
    VOLUME = {27},
      YEAR = {2023},
    NUMBER = {2},
     PAGES = {513--573},
      ISSN = {1465-3060,1364-0380},
   MRCLASS = {22E40 (37A17 37A25 37A44)},
  MRNUMBER = {4589560},
MRREVIEWER = {Cheng\ Zheng},
       DOI = {10.2140/gt.2023.27.513},
       URL = {https://doi.org/10.2140/gt.2023.27.513},
}

@misc{Rie24code,
  title        = {{KLP} algorithm},
  author       = {Riestenberg, J. Maxwell},
  year         = 2024,
  note         = {\url{https://github.com/MaxRiestenberg/KLP-algorithm} [Accessed: (August 2024)]}
}

@misc{Weis24code,
  title        = {Anosov Check},
  author       = {Weisman, Theodore},
  year         = 2024,
  note         = {\url{https://github.com/tjweisman/anosov_check} [Accessed: (August 2024)]}
}

@article {riestenberg2024quantified,
    AUTHOR = {Riestenberg, J. Maxwell},
     TITLE = {A quantified local-to-global principle for {M}orse
              quasigeodesics},
   JOURNAL = {Groups Geom. Dyn.},
  FJOURNAL = {Groups, Geometry, and Dynamics},
    VOLUME = {19},
      YEAR = {2025},
    NUMBER = {1},
     PAGES = {37--107},
      ISSN = {1661-7207,1661-7215},
   MRCLASS = {53C35 (20F65 22E40)},
  MRNUMBER = {4862327},
       DOI = {10.4171/ggd/829},
       URL = {https://doi.org/10.4171/ggd/829},
}

@article{delarue2025locally,
  title={{Locally homogeneous {{A}xiom} {A} flows {{I}}: projective {{A}nosov} subgroups and exponential mixing}},
  author={Delarue, Benjamin and Monclair, Daniel and Sanders, Andrew},
  journal={Geometric and Functional Analysis},
  pages={1--63},
  year={2025},
  publisher={Springer}
}

@article{dancigergueritaudkassel24convexcocompactrealprojective,
  TITLE = {{Convex cocompact actions in real projective geometry}},
  AUTHOR = {Danciger, Jeffrey and Gu{\'e}ritaud, Fran{\c c}ois and Kassel, Fanny},
  URL = {https://hal.science/hal-04285877},
  NOTE = {85 pages, 9 figures},
  JOURNAL = {{Annales Scientifiques de l'{\'E}cole Normale Sup{\'e}rieure}},
  PUBLISHER = {{Soci{\'e}t{\'e} math{\'e}matique de France}},
  VOLUME = {57},
  PAGES = {1751-1841},
  YEAR = {2024},
  DOI = {10.48550/arXiv.1704.08711},
  HAL_ID = {hal-04285877},
  HAL_VERSION = {v1},
}

\end{document}